\documentclass[11pt,reqno]{amsart}
\usepackage[english]{babel}
\usepackage{lmodern}
\usepackage[top=1in,bottom=1in,left=1in,right=1in,marginparwidth=1.7cm,marginparsep=0.2cm]{geometry}
\usepackage{amsmath,amssymb,amsthm,mathtools}
\usepackage[dvipsnames]{xcolor}
\usepackage{booktabs,array,tikz}
\usepackage{hyperref}
\hypersetup{colorlinks=true,linkcolor=blue,filecolor=magenta,urlcolor=red,citecolor=blue}
\hypersetup{pdftitle={Simple Cuts, Simple Landscape: Dimension-Free Rank Bounds for Burer--Monteiro Factorizations via Rank-Width},pdfsubject={A binary block-energy induction for signed interactions}}
\allowdisplaybreaks[1]
\theoremstyle{plain}
\newtheorem{theorem}{Theorem}[section]
\newtheorem{lemma}[theorem]{Lemma}

\newtheorem{definition}[theorem]{Definition}

\newcommand{\R}{\mathbb R}
\newcommand{\tr}{\operatorname{tr}}
\newcommand{\rank}{\operatorname{rank}}
\newcommand{\diag}{\operatorname{diag}}
\newcommand{\im}{\operatorname{range}}
\newcommand{\Span}{\operatorname{span}}
\newcommand{\rw}{\operatorname{rw}_{\R}}
\newcommand{\ip}[2]{\langle #1,#2\rangle}
\newcommand{\norm}[1]{\lVert #1\rVert}
\newcommand{\D}{\Delta}
\newcommand{\Qs}{\mathcal Q_S}

\title[Simple Cuts, Simple Landscape]{Simple Cuts, Simple Landscape:\\Dimension-Free Rank Bounds for\\Burer--Monteiro Factorizations via Rank-Width}
\author[Wu]{Hongjin Wu}
\date{}
\hypersetup{pdfauthor={Hongjin Wu}}
\begin{document}
\raggedbottom
\begin{abstract}
In this work, we study Burer--Monteiro factorizations of semidefinite
programs with unit diagonal constraints. For a real symmetric cost matrix
whose rank-width over $\R$ is at most $k$, we prove that every non-flat
second-order stationary point of the factorized problem is globally optimal whenever the
factorization rank is at least $2k$, independently of the matrix size.
\end{abstract}
\maketitle

\section{Introduction}\label{sec:introduction}

Low-rank matrix factorizations can reduce the number of variables in
semidefinite optimization, but introduce nonconvexity. A central question
is how large the factorization rank must be for second-order stationarity
to guarantee global optimality. In this work, we study semidefinite
programs with unit diagonal constraints and ask whether the required
rank can be controlled by the structure of the cost matrix, independently
of its size.

Let $A=A^\top\in\R^{n\times n}$ have zero diagonal. Consider the
semidefinite program
\begin{equation}\label{eq:sdp}
 \min_{Z=Z^\top\succeq0}
 \left\{-\frac12\tr(AZ): Z_{ii}=1,\ i=1,\ldots,n\right\}.
\end{equation}
The zero-diagonal assumption entails no loss of generality, since the
diagonal of $A$ contributes only a constant to the objective. Following
the Burer--Monteiro approach~\cite{BM03}, we write $Z=VV^\top$ with
$V\in\R^{n\times m}$, thereby restricting the rank of $Z$ to at most
$m$. Positive semidefiniteness is then automatic, and the diagonal
constraints require the rows of $V$ to have unit norm. Writing the $i$th
row as $v_i^\top$, we obtain
\begin{equation}\label{eq:energy}
 \min_{v_1,\ldots,v_n\in\mathbb S^{m-1}} E(V),
 \qquad E(V)=-\sum_{i<j}A_{ij}\ip{v_i}{v_j}
           =-\frac12\tr(V^\top AV).
\end{equation}
The parameter $m$ is the factorization rank. Choosing a small $m$
reduces the number of variables, whereas increasing $m$ enlarges the
space in which the vectors can move. The optimization landscape
therefore depends on both $A$ and $m$.

General landscape guarantees relate the required factorization rank to
the number of constraints. For the problem considered here, a general
result implies that, for almost every cost matrix $A$, every second-order
stationary point of \eqref{eq:energy} yields a global optimum of
\eqref{eq:sdp} whenever
\[
 \frac{m(m+1)}{2}>n
\]
\cite{BVB}. This sufficient condition depends on the number of diagonal
constraints, without distinguishing cost matrices according to their
interaction structure. Moreover, an almost-everywhere guarantee in the
ambient space does not by itself establish a guarantee for every matrix
in a prescribed structured family. These observations motivate the
following question.

\par\medskip
\noindent\textbf{Question.}
Can the structure of the cost matrix yield sharper bounds on the
factorization rank required for every second-order stationary point
to be globally optimal?
\par\medskip

In this work, we study this question through rank-width over $\R$. For a subset
$S\subseteq[n]$, the interaction between $S$ and its complement is
described by the cut matrix $A[S,[n]\setminus S]$. A rank-decomposition
is a tree of maximum degree three whose leaves are identified with
$[n]$. Deleting an edge partitions the leaves into two sets and hence
determines a cut matrix. The width of the decomposition is the largest
rank, over $\R$, of these cut matrices. The rank-width of $A$ over $\R$,
denoted by $\rw(A)$, is the minimum width over all such decompositions.
Thus, bounded rank-width over $\R$ expresses a form of structural simplicity:
the interactions across the cuts of a suitable decomposition have low
rank, even when the matrix is large or dense.

\par\medskip
\noindent\textbf{Main result (informal).}
If $\rw(A)\le k$ and $m\ge2k$, then every non-flat second-order
stationary point of \eqref{eq:energy} is a global minimizer of
\eqref{eq:energy}.
\par\medskip

The formulation also connects directly to the global synchronization problem on graphs. A positive
coefficient $A_{ij}$ favors alignment of $v_i$ and $v_j$, whereas a
negative coefficient favors opposite directions. For $m=2$, writing
$v_i=(\cos\theta_i,\sin\theta_i)$ gives
\[
 E(V)=-\sum_{i<j}A_{ij}\cos(\theta_i-\theta_j),
\]
the phase interaction energy arising in synchronization problems. The
factorization rank thus also describes the dimension of the interacting
unit vectors. From this perspective, our result relates the structure
of the interactions to the dimension sufficient to rule out nonglobal,
non-flat second-order stationary points.

\subsection{Main result}\label{subsec:contributions}

Our main result shows that a factorization rank of at least $2k$
suffices when the cost matrix has rank-width at most $k$.

\begin{theorem}\label{thm:main}
Let $k\ge1$ be an integer, and let $A\in\R^{n\times n}$ be symmetric with zero diagonal
and rank-width over $\R$ at most $k$. If $m\ge2k$, every non-flat
second-order stationary point of \eqref{eq:energy} is a global minimizer.
\end{theorem}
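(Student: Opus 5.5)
The plan is to use the standard dual-certificate route and use rank-width only to build enough escape directions. At a first-order stationary point $V$ we set $\lambda_i=\sum_j A_{ij}\ip{v_i}{v_j}$, $\Lambda=\diag(\lambda)$ and $S=\Lambda-A$. Stationarity gives $SV=0$, and the second-order condition says $\sum_i\lambda_i\norm{u_i}^2-\sum_{i,j}A_{ij}\ip{u_i}{u_j}\ge0$ for every tangent $U$ (rows $u_i\perp v_i$). Weak duality for \eqref{eq:sdp} shows that if $S\succeq0$, then $VV^\top$ is optimal for \eqref{eq:sdp}. Since it is feasible for the rank-$m$ restriction, $V$ is a global minimizer of \eqref{eq:energy}. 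So everything reduces to proving $S\succeq0$. We argue by contradiction: assume $x^\top Sx<0$ and produce a tangent $U$ with negative second variation, unless $V$ is flat. I read ``flat'' as: $V$ admits a continuous family of energy-preserving moves that carries it to a different stationary configuration.

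The structural input is the following observation about a single cut. Let $(T,[n]\setminus T)$ be a cut of a rank-decomposition of width at most $k$, and factor $A[T,[n]\setminus T]=\sum_{t=1}^k a_tb_t^\top$. Then the field exerted on $T$ from outside is
\[
 \sum_{j\notin T}A_{ij}v_j=\sum_{t}a_t(i)\,w_t,\qquad w_t=\sum_{j\notin T}b_t(j)v_j .
\]
Symmetrically, the field on the complement from $T$ lies in the span of $p_t=\sum_{i\in T}a_t(i)v_i$. Hence the cross interaction is governed by the subspace $W_T=\Span\{w_t,p_t\}$, which has dimension at most $2k\le m$. Any rotation of the $T$-block that acts trivially on $W_T$ preserves the cross energy, and it preserves the energy inside $T$ automatically. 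So in each cut there is room to move one side rigidly without affecting the other. Such a move is either flat, which is excluded by hypothesis, or its second variation must be controlled.

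The core is a binary induction over the rank-decomposition tree, rooted at an edge. For each node we consider the leaf set $T$ below it and the compressed ``block energy'' quadratic form. This is $S$ restricted to $T$, after the influence of the complement has been replaced by its projection onto the $\le k$ directions $a_t$. We prove inductively that this form is positive semidefinite on the relevant subspace. At a leaf this is just $\lambda_i\ge0$, which follows from second-order stationarity by using a tangent direction at $v_i$ alone; here $m\ge2$ is needed. At an internal node with children $T_1,T_2$, we combine the two children's certificates across a cut of rank $\le k$. Suppose the merged form had a negative direction $x$. We would then take $u_i=x_i\,y$ with a unit vector $y$, chosen in the at-least-$(m-k)\ge k$-dimensional space left free by the $k$ interface directions on one side, and projected onto the tangent spaces. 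Averaging the second variation over such $y$, as in the usual random-direction trick, makes the expected value a positive multiple of $x^\top Sx$ plus correction terms. Those corrections involve only the interface vectors and vanish by first-order stationarity together with the block structure. This would give a negative curvature direction, a contradiction.

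The main obstacle is this merging step. The averaging trick normally requires $y$ orthogonal to all the $v_i$ in the support of $x$, which is impossible when $m<n$. The argument must therefore show that only the $\le2k$ interface directions $W_T$ actually obstruct the averaging. I would first try to decompose $x$ along the tree so that on each block the part of $x$ correlated with the $v_i$ is absorbed by the child's certificate, leaving only interface terms. The non-flatness hypothesis must then be invoked exactly at the degenerate case where the interface span fills all $2k$ dimensions and the averaged curvature is zero.
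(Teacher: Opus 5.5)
Your reduction to $M\succeq0$ (your $S=\Lambda-A$) via weak duality is sound. It is also compatible with the paper, whose tree certificates implicitly yield $M\succeq0$. The gap is the merge step, which carries the whole proof and which you leave as a plan; moreover, the mechanism you suggest does not work. For $u_i=x_i\bigl(y-\ip{y}{v_i^*}v_i^*\bigr)$ one finds $\tr(U^\top MU)=\norm{y}^2x^\top Mx-\sum_i\mu_ix_i^2\ip{y}{v_i^*}^2+2\sum_{i,j}A_{ij}x_ix_j\ip{y}{v_i^*}^2-\sum_{i,j}A_{ij}x_ix_j\ip{y}{v_i^*}\ip{y}{v_j^*}\ip{v_i^*}{v_j^*}$. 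These corrections involve every node in the support of $x$, not only interface vectors. First-order stationarity does not touch them for arbitrary $x$, and taking $y$ orthogonal to the $\le k$ interface directions leaves them intact. They disappear when $y\perp v_i^*$ for every $i$ in the block, and your premise that such a $y$ is unavailable is precisely what is false. (The obstruction is also $\rank V^*=m$, not $m<n$: if $\rank V^*<m$, a common perpendicular $e$ and $\xi_i=z_ie$ give $z^\top Mz\ge0$ immediately.)

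The missing idea is Lemma~\ref{lem:confinement}: non-flatness confines each block to the span of its external forces. If $\rho_A(S)\le k$, $m\ge k+2$ and $\dim U_{S^c}>k$, then $U_S=\Span\{h_i:i\in S\}$. This space has dimension at most $k$ and lies in the span of your exterior vectors $w_t$; it is not your $W_T$, which may fill $\R^m$. Otherwise, rotating $S$ inside $\Span\{h_i\}^\perp$ preserves the energy but changes a cross inner product. Lemma~\ref{lem:root} then handles the full-rank case: orient each tree edge toward its side of dimension $>k$; the sink is a vertex whose three branches are $k$-dimensional and pairwise complementary, forcing $m=2k$. Consequently, when $\rank V^*=m$, every merge except $S_1,S_2\to S_1\cup S_2$ takes place in a block with $\dim U_S^\perp\ge2$, so $\xi_i=z_ie$ is exactly tangent and no averaging is needed.

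The induction must then carry a certificate rather than positivity of a compressed form: $M[S,S]=C_S+P_ST_SP_S^\top$ with $C_S\succeq0$, $C_SV_S^*=0$, $T_S\succ0$, $W_S^*=X_S^*T_S$, and test vectors $Z_S$. Defining $T_S=(H^\top J^\dagger H)^{-1}$ needs $\ker J\subseteq\ker H^\top$, obtained from the mixed Hessian term with a moving exterior node, and it needs strict $\mu_i>0$ at the leaves. The exceptional merge is exact, $J=HT_SH^\top$ with $T_S=T_3^{-1}$, by invertibility of $[X_1^*\ X_2^*]$. The root then closes by completing a square, not by a vanishing-curvature argument.

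Finally, your reading of flatness is not Definition~\ref{def:nonflat}. The paper uses that definition throughout: for nonzero cuts, for $\mu_i>0$, for alignment, and for confinement. The case $k=1$, $m=2$ lies outside any perpendicular-direction scheme, since confinement needs $m\ge k+2$. There the paper uses the scalar alignment $w_S^*=\lambda_Su_S^*$ with $\lambda_S>0$ and a two-speed rotation test.
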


\subsection{Related work}\label{subsec:related-work}

Although semidefinite programs are convex, directly optimizing over a
dense $n\times n$ symmetric matrix involves $n(n+1)/2$ scalar variables
and can become computationally expensive as $n$ grows. To reduce this
cost, Burer and Monteiro~\cite{BM03} proposed the low-rank factorization
$Z=VV^\top$, with $V\in\R^{n\times m}$ and $m\ll n$. This reduces the
number of variables to $nm$ and enforces positive semidefiniteness
automatically. The resulting problem is nonconvex, however, raising the
possibility of nonglobal local minima. A central question is therefore
how large $m$ must be to guarantee that every second-order stationary
point of the factorized problem is globally optimal.

Boumal, Voroninski, and Bandeira~\cite{BVB} established such guarantees for
smooth equality-constrained semidefinite programs. In the unit-diagonal
setting, their generic-cost guarantee applies when
$m(m+1)/2>n$, while their uniform guarantee for all cost matrices applies
when $m>n/2$~\cite[Theorem~1.4 and Corollary~5.11]{BVB}.
Waldspurger and Waters~\cite{WW20} showed that, in general, this generic
rank threshold must grow on the order of the square root of the number
of constraints, even for problems with a rank-one global solution.
A bound independent of $n$ therefore calls for additional structure.

The bounds above depend on the matrix size $n$. For matrices of
rank-width at most $k$, Theorem~\ref{thm:main} gives the sufficient bound
$m\ge2k$, independent of $n$. We impose no genericity assumption on the
cost matrix $A$, but require the second-order stationary point $V^*$ to
be non-flat.

Our argument starts from the rank-width-one synchronization mechanism
in Wu~\cite{WuPositive}. For nonnegative weights and factorization
rank $m=2$, that work proves that every local minimum is synchronized
when the coupling matrix has connected support and rank-width one over
$\R$. The proof merges synchronized vertex sets along a
rank-decomposition tree. Since synchronized configurations are globally
optimal in that setting, proving synchronization establishes global
optimality.

Signed weights introduce a different difficulty: a global minimizer
need not be synchronized, and its configuration is not known in advance.
We therefore compare the energy at the stationary configuration with
the energy at an arbitrary feasible configuration.
We split this difference into the changes obtained by moving each side
of a cut separately, minus a cross term. We prove lower bounds for the
two single-set changes by induction on a rank-decomposition tree.
Together, these bounds are large enough to offset the cross term, so
the total energy cannot decrease.

\subsection{Organization}\label{subsec:organization}

Section~\ref{sec:setup} introduces the first- and second-order conditions
and non-flatness. We then define rank-width over $\R$ and show how a
cut factorization expresses the interaction between two vertex sets
in terms of their aggregate vectors.
Section~\ref{sec:merge} establishes the energy bound used in the proof.
The key step is an induction argument: bounds for the energy changes
on two child sets are combined to obtain the corresponding bound on
their union. We first give this argument for rank-width one, then
extend it to rank-width $k\ge2$.
Section~\ref{sec:finish} applies the bounds to the two sides of the
root cut. Together, they control the cross term in the total energy
change and show that no feasible configuration has lower energy than
$V^*$, completing the proof of Theorem~\ref{thm:main}.

\section{Preliminaries}\label{sec:setup}

\subsection{Notation}\label{subsec:notation}

\noindent\textbf{Sets and matrices.}
Write $[n]=\{1,\ldots,n\}$ and $S^c=[n]\setminus S$. For index sets $S,T$,
$A[S,T]$ denotes the corresponding submatrix, and $V_S$ consists of the
rows of $V$ indexed by $S$. The notation $L\sqcup R$ means a disjoint
union. All matrix ranks are taken over $\R$. For a symmetric matrix,
$C\succeq0$ and $C\succ0$ mean positive semidefiniteness and positive
definiteness, respectively.

\smallskip
\noindent\textbf{Vectors.}
We use $\ip{\cdot}{\cdot}$ and $\norm{\cdot}$ for the Euclidean inner
product and norm, and $\norm{\cdot}_F$ for the Frobenius norm.
Consider a change from a fixed original configuration $V^*$ to another
feasible configuration $V$. At each vertex $i$, the vector changes from
$v_i^*$ to $v_i$; we denote its change by $\D v_i=v_i-v_i^*$.

\subsection{Stationary points and non-flat directions}\label{subsec:stationarity}

We call a feasible configuration $V^*$ a \emph{second-order stationary
point} (SOSP) if
\[
 \operatorname{grad}E(V^*)=0,
 \qquad \operatorname{Hess}E(V^*)\succeq0,
\]
where $\operatorname{grad}$ and $\operatorname{Hess}$ are the Riemannian
gradient and Hessian on $(\mathbb S^{m-1})^n$, and positive semidefiniteness
is understood on the tangent space at $V^*$.
We also impose the following condition on motions that leave the
energy unchanged.

\begin{definition}[Non-flatness]\label{def:nonflat}
A configuration $V^*$ is \emph{non-flat} if the following holds for every
choice of fixed skew-symmetric matrices $\Omega_i\in\R^{m\times m}$.
If the energy remains constant along
\[
 v_i(t)=e^{t\Omega_i}v_i^*
\]
for all sufficiently small $t$, then every inner product
$\ip{v_i(t)}{v_j(t)}$ also remains constant for all sufficiently small $t$.
\end{definition}
Thus a motion of this form can preserve the energy only if it preserves
all pairwise inner products. Rotating all vectors together is allowed.
When $m=2$, write
$\Omega_i=\omega_i J$, where $J$ is the generator of planar rotations.
The paths become $\theta_i(t)=\theta_i^*+t\omega_i$. Their full Gram
matrix is constant precisely when all $\omega_i$ agree. Thus the
definition agrees with non-flatness along straight phase directions in
the planar problem. The condition concerns paths along which the energy
is exactly constant; the Hessian may still have additional zero directions.

\begin{lemma}[Stationarity conditions]\label{lem:energy}
Define
\begin{equation}\label{eq:M}
 \mu_i=\sum_jA_{ij}\ip{v_i^*}{v_j^*},
 \qquad M=\diag(\mu)-A.
\end{equation}
At an SOSP,
\begin{equation}\label{eq:foc-soc}
 MV^*=0,\qquad
 \tr(\Xi^\top M\Xi)\ge0
 \quad\text{whenever }\ip{\xi_i}{v_i^*}=0\text{ for every }i.
\end{equation}
\end{lemma}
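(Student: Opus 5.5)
The plan is to compute the Riemannian gradient and Hessian of $E$ on the product of spheres $(\mathbb S^{m-1})^n$ in terms of the Euclidean gradient, and then read off the two conditions. First, $E(V)=-\frac12\tr(V^\top AV)$ has Euclidean gradient $\nabla E(V)=-AV$, whose $i$th row is $-\sum_j A_{ij}v_j$. Each factor $\mathbb S^{m-1}$ is an embedded submanifold of $\R^m$. The Riemannian gradient is therefore the row-wise projection onto the tangent spaces $T_{v_i}=\{\xi:\ip{\xi}{v_i}=0\}$:
\[
 \operatorname{grad}E(V)_i=-\sum_jA_{ij}v_j+\Bigl(\sum_jA_{ij}\ip{v_i}{v_j}\Bigr)v_i .
\]
At $V^*$ this row equals $\mu_iv_i^*-\sum_jA_{ij}v_j^*$, which is exactly the $i$th row of $MV^*$ with $M=\diag(\mu)-A$ as in \eqref{eq:M}. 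Hence $\operatorname{grad}E(V^*)=0$ is equivalent to $MV^*=0$.

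For the second-order condition I would use the standard formula for the Riemannian Hessian on a sphere, applied to each factor. For $\dot v_i=\xi_i\in T_{v_i^*}$,
\[
 \operatorname{Hess}E(V^*)[\Xi]_i=\operatorname{Proj}_{v_i^*}\bigl(\nabla^2E\,[\Xi]\bigr)_i-\ip{v_i^*}{\nabla E(V^*)_i}\,\xi_i .
\]
Here $\nabla^2E[\Xi]=-A\Xi$ and $\ip{v_i^*}{\nabla E(V^*)_i}=-\mu_i$. Taking the inner product with $\Xi$ removes the projection, because $\Xi$ is already tangent. This gives
\[
 \ip{\Xi}{\operatorname{Hess}E(V^*)[\Xi]}=-\tr(\Xi^\top A\Xi)+\sum_i\mu_i\norm{\xi_i}^2=\tr(\Xi^\top M\Xi).
\]
Positive semidefiniteness of the Hessian on the tangent space is then precisely the stated inequality for all $\Xi$ with $\ip{\xi_i}{v_i^*}=0$.

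As an independent check of the curvature term, and to avoid quoting the Hessian formula, I would verify the second-order expansion along geodesics. Take $v_i(t)=\cos(t\norm{\xi_i})v_i^*+\sin(t\norm{\xi_i})\xi_i/\norm{\xi_i}$. Then $v_i(t)=v_i^*+t\xi_i-\frac{t^2}{2}\norm{\xi_i}^2v_i^*+O(t^3)$. Substituting into $E$, the $t^2$ coefficient is
\[
 -\tfrac12\tr(\Xi^\top A\Xi)+\tfrac12\sum_i\norm{\xi_i}^2\sum_jA_{ij}\ip{v_i^*}{v_j^*}=\tfrac12\tr(\Xi^\top M\Xi).
\]
The only place requiring care is this curvature correction $\mu_i\norm{\xi_i}^2$, which comes from the normal component of the acceleration and uses the symmetry of $A$ and its zero diagonal. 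Everything else is routine.
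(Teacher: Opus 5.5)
Your proposal is correct and follows essentially the paper's argument. The gradient step is identical, and the curvature term $\mu_i\norm{\xi_i}^2$ comes from the normal component of the acceleration in both proofs: the paper gets it for an arbitrary feasible curve from $\ip{v_i''(0)}{v_i^*}=-\norm{\xi_i}^2$ together with first-order stationarity, while you get it from the standard sphere Hessian formula (or directly along geodesics). The only difference is packaging: you compute $\ip{\Xi}{\operatorname{Hess}E(V^*)[\Xi]}$ directly, matching the stated definition of an SOSP, whereas the paper goes through $E''(0)$, which equals that form because $V^*$ is a critical point.
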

\begin{proof}
First-order stationarity on each sphere gives
$\sum_jA_{ij}v_j^*=\mu_i v_i^*$, hence $MV^*=0$.
For a feasible curve, write $\xi_i=v_i'(0)$ and $a_i=v_i''(0)$.
Differentiating $\norm{v_i(t)}^2=1$ twice gives
$\ip{a_i}{v_i^*}=-\norm{\xi_i}^2$. Substituting this into the second
derivative of the energy at a stationary point gives
\[
 E''(0)=\sum_i\mu_i\norm{\xi_i}^2
        -\sum_{i,j}A_{ij}\ip{\xi_i}{\xi_j}
       =\tr(\Xi^\top M\Xi).
\]
Every tangent direction is the velocity of a feasible curve. The
second-order condition therefore gives \eqref{eq:foc-soc} for every
matrix $\Xi$ whose rows satisfy $\xi_i\perp v_i^*$.

\end{proof}

\begin{lemma}[Nonzero cuts and positive single-vertex coefficients]\label{lem:positive-leaves}
Suppose $n\ge2$ and $V^*$ is a non-flat SOSP with $m\ge2$.
Every nontrivial cut of $A$ is nonzero, and $\mu_i>0$ for every node $i$.
\end{lemma}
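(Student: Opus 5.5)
Both claims come from the non-flatness condition (Definition~\ref{def:nonflat}), applied to motions built from rotations, with the second-order condition of Lemma~\ref{lem:energy} added for the second claim.

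\textbf{Nonzero cuts.} Suppose some nontrivial $S$ (with $\emptyset\ne S\ne[n]$) has $A[S,S^c]=0$. Since $m\ge2$, pick a nonzero skew-symmetric $\Omega$ and set $\Omega_i=\Omega$ for $i\in S$ and $\Omega_i=0$ for $i\in S^c$. Each inner product within $S$ or within $S^c$ is preserved, and every cross pair carries zero weight, so $E$ is constant along $v_i(t)=e^{t\Omega_i}v_i^*$. Non-flatness then forces every cross inner product $\ip{e^{t\Omega}v_i^*}{v_j^*}$ ($i\in S$, $j\in S^c$) to be constant. This is not yet a contradiction, because $\Omega$ might fix $v_i^*$ or the relevant plane. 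The fix is to vary $\Omega$ over all skew matrices. For fixed $i\in S$, $j\in S^c$, constancy for every $\Omega$ gives $\ip{\Omega v_i^*}{v_j^*}=0$ for all skew $\Omega$, which forces $v_j^*=\pm v_i^*$. Taking $\Omega$ with $\Omega v_i^*\ne0$, the second derivative $\ip{\Omega^2 v_i^*}{v_j^*}=\mp\norm{\Omega v_i^*}^2\ne0$ contradicts constancy. Hence $A[S,S^c]\ne0$.

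\textbf{Positivity of $\mu_i$.} Fix $i$. Apply the second-order condition with $\xi_i=u\perp v_i^*$ a unit vector and $\xi_j=0$ for $j\ne i$. The zero diagonal of $A$ gives $\tr(\Xi^\top M\Xi)=\mu_i\norm{u}^2$, so $\mu_i\ge0$. To exclude $\mu_i=0$, rotate only $v_i$: take $\Omega_i=\Omega$ with $\Omega v_i^*\ne0$ and $\Omega_j=0$ otherwise. The energy along this path is
\[
E(t)=-\ip{e^{t\Omega}v_i^*}{w_i}+\text{const},\qquad w_i=\sum_jA_{ij}v_j^*=\mu_iv_i^*,
\]
using first-order stationarity $MV^*=0$. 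If $\mu_i=0$ then $w_i=0$ and $E$ is exactly constant. Non-flatness then requires $\ip{e^{t\Omega}v_i^*}{v_j^*}$ to be constant for every $j\ne i$, and $n\ge2$ guarantees such a $j$ exists. By the same all-$\Omega$ argument as above, this gives $v_j^*=\pm v_i^*$ for every $j$, and then choosing $\Omega$ with $\Omega v_i^*\ne0$ makes that inner product nonconstant, a contradiction. Hence $\mu_i>0$.

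\textbf{Main obstacle.} The delicate point, in both parts, is justifying that a single well-chosen $\Omega$ moves an inner product. This needs the two-step argument used above. The first step uses constancy for all skew $\Omega$ to get $v_j^*\perp(v_i^*)^\perp$, hence $v_j^*=\pm v_i^*$. The second step uses the second-order term $\ip{\Omega^2v_i^*}{v_j^*}$. The hypothesis $m\ge2$ is what makes a skew $\Omega$ with $\Omega v_i^*\ne0$ available.
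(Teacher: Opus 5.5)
Your proof is correct and follows the paper's argument: a rigid rotation of $S$ handles the zero-cut claim, and for $\mu_i>0$ you use a single-node tangent direction followed by a single-node rotation, which preserves the energy because $\sum_jA_{ij}v_j^*=\mu_iv_i^*=0$. Your two-step derivative argument only makes explicit the paper's remark that a suitable plane rotation makes the inner product nonconstant: vanishing of $\ip{\Omega v_i^*}{v_j^*}$ for all skew $\Omega$ forces $v_j^*=\pm v_i^*$, and then the second derivative $\mp\norm{\Omega v_i^*}^2$ is nonzero.
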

\begin{proof}
If $A[S,S^c]=0$ for a nontrivial cut, rotating all vectors in $S$
together preserves the energy: the internal inner products stay fixed,
and there is no interaction across the cut. Such a rotation can still
change an inner product between a node in $S$ and one in
$S^c$: for any two nonzero vectors in dimension at least two, a suitable
plane rotation of the first makes their inner product nonconstant.
This contradicts non-flatness.

Next, choose a nonzero tangent direction that moves only node $i$.
The second-order condition \eqref{eq:foc-soc} gives $\mu_i\ge0$.
If $\mu_i=0$, first-order stationarity gives
$\sum_jA_{ij}v_j^*=0$. Every rotation of node $i$ alone then preserves
the energy. A suitable rotation changes an inner product with another
node, again a contradiction.
\end{proof}

\subsection{Rank-width}\label{subsec:rank-width}

For $S\subseteq[n]=\{1,\ldots,n\}$, let
\[
 \rho_A(S)=\rank_{\R}A[S,S^c].
\]
A rank-decomposition is a tree of maximum degree three with one leaf
for each vertex in $[n]$. Removing an edge separates the leaves into
two sets $S$ and $S^c$. The width of that edge is $\rho_A(S)$.
The \emph{rank-width over $\R$}
$\rw(A)$ is the minimum, over such trees, of the maximum edge width.
We set $\rw(A)=0$ for $n\le1$. A degree-two internal vertex can be
removed by replacing its two incident edges with a single edge; this
does not change the cuts represented by the tree. We use the matrix
rank-width framework of Oum~\cite{Oum12},
based on the tree-and-cut construction of Oum and Seymour~\cite{OS}.
Throughout this paper, the field is $\R$. For an unweighted graph,
this need not agree with rank-width computed over $\mathbb F_2$.

To root the decomposition, first remove all degree-two internal vertices.
Then insert a new vertex on one edge and use it as the root.
Each internal vertex now has two children. For a vertex other than the
root, let $S$ be the set of leaves below it. The edge joining this vertex
to its parent separates $S$ from $S^c$. The induction proves a bound for
each such set $S$, starting with individual leaves and then joining the
sets below two children.

\subsection{Aggregate vectors across a cut}\label{subsec:aggregates}

Let $S\subseteq[n]$ and suppose the cut matrix $A[S,S^c]$ has rank
$r_S\ge1$. We will express the energy across this cut using $r_S$
weighted sums of vectors on each side. Choose a factorization
\begin{equation}\label{eq:cut}
 A[S,S^c]=P_SQ_S^\top,\qquad
 P_S\in\R^{|S|\times r_S},\quad
 Q_S\in\R^{|S^c|\times r_S}.
\end{equation}
Both factors have full column rank. For $a=1,\ldots,r_S$, the
$a$th column of $P_S$ defines an aggregate vector on $S$. Collect these
vectors as the columns of $X_S$:
\begin{equation}\label{eq:aggregates}
 x_S^a=\sum_{i\in S}(P_S)_{ia}v_i,\qquad
 X_S=V_S^\top P_S=[x_S^1\ \cdots\ x_S^{r_S}]
       \in\R^{m\times r_S}.
\end{equation}
When only $S$ moves, the vectors in $S^c$ remain at their original
values. Using the columns of $Q_S$ as weights, define their aggregates by
\[
 \begin{aligned}
 (w_S^a)^*&=\sum_{j\in S^c}(Q_S)_{ja}v_j^*,
       \qquad a=1,\ldots,r_S,\\
 W_S^*&=[(w_S^1)^*\ \cdots\ (w_S^{r_S})^*]
        =(V_{S^c}^*)^\top Q_S.
 \end{aligned}
\]
Substituting the factorization into the crossing energy gives
\begin{equation}\label{eq:aggregate-crossing}
 -\sum_{i\in S,j\in S^c}A_{ij}\ip{v_i}{v_j^*}
 =-\sum_{a=1}^{r_S}\ip{x_S^a}{(w_S^a)^*}
 =-\tr\bigl(X_S(W_S^*)^\top\bigr).
\end{equation}
Thus the crossing energy is a sum of $r_S$ inner products between
aggregate vectors. In particular, a rank-one cut requires one aggregate
vector on each side. The weights in $P_S$ and $Q_S$ may have either sign.

We keep these factors fixed when comparing configurations. The change
in the aggregates is therefore
\[
 \D X_S=X_S-X_S^*=(\D V_S)^\top P_S.
\]
When two sets $L$ and $R$ are joined to form $S$, we need to relate their
aggregate vectors. The next lemma expresses the aggregates of $S$ and
the interaction between $L$ and $R$ using the aggregates of the two children.

\begin{lemma}[Relating the aggregates of a set and its children]\label{lem:compatible}
Let $S$ be the leaf set below a nonroot vertex, with children whose leaf
sets are $L$ and $R$, so $S=L\sqcup R$. Choose full-column-rank cut
factorizations for $L,R,S$. There are matrices
\[
 \Gamma\in\R^{r_L\times r_R},\qquad
 H\in\R^{(r_L+r_R)\times r_S}
\]
such that, with $P_0=\diag(P_L,P_R)$,
\begin{equation}\label{eq:compatible}
 A[L,R]=P_L\Gamma P_R^\top,\qquad P_S=P_0H.
\end{equation}
The matrix $H$ has full column rank, and, writing $X_0=[X_L\ X_R]$,
\begin{equation}\label{eq:aggregate-merge}
 X_S=X_0H.
\end{equation}
\end{lemma}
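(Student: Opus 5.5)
The whole argument is linear algebra on column spaces. The basic fact I will use is that for a full-column-rank factorization $A[S,S^c]=P_SQ_S^\top$, the column space of $P_S$ equals the column space of $A[S,S^c]$. Indeed, $Q_S$ has full column rank, so $Q_S^\top$ is surjective onto $\R^{r_S}$. Hence $\im A[S,S^c]=\im P_S$, and likewise $\im A[S,S^c]^\top=\im Q_S$. Both identities of \eqref{eq:compatible} then reduce to range inclusions.

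\textbf{The interaction term.} Since $R\subseteq L^c$, the matrix $A[L,R]$ is a column submatrix of $A[L,L^c]=P_LQ_L^\top$. Therefore $A[L,R]=P_L\,(Q_L[R,\cdot])^\top$. Similarly $L\subseteq R^c$, so $A[R,L]=P_R\,(Q_R[L,\cdot])^\top$, and by symmetry $A[L,R]=Q_R[L,\cdot]\,P_R^\top$.

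Thus $A[L,R]$ has column space inside $\im P_L$ and row space inside $\im P_R$. Let $P_L^+=(P_L^\top P_L)^{-1}P_L^\top$ be the left inverse, so that $P_LP_L^+$ is the orthogonal projector onto $\im P_L$, and define $P_R^+$ the same way. Set
\[
\Gamma=P_L^+A[L,R](P_R^+)^\top .
\]
The two range inclusions give $P_LP_L^+A[L,R]=A[L,R]$ and $A[L,R](P_R^+)^\top P_R^\top=A[L,R]$. Combining them yields $A[L,R]=P_L\Gamma P_R^\top$.

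\textbf{The factor $P_S$.} Since $S^c\subseteq L^c$, the rows of $A[S,S^c]$ indexed by $L$ form $A[L,S^c]$. This is a column submatrix of $A[L,L^c]$, so its columns lie in $\im P_L$; the same holds for the $R$ rows with $\im P_R$. Hence every column of $A[S,S^c]$ lies in $\im P_0=\im P_L\oplus\im P_R$, using the block-diagonal form of $P_0$.

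From the basic fact, $\im P_S=\im A[S,S^c]\subseteq\im P_0$. Because $P_0$ has full column rank, there is a unique $H$ with $P_S=P_0H$, namely $H=P_0^+P_S$. Full column rank of $H$ follows from that of $P_S$: if $Hz=0$ then $P_Sz=0$, so $z=0$.

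\textbf{The aggregates.} Finally,
\[
X_S=V_S^\top P_S=V_S^\top P_0H=[V_L^\top P_L\ \ V_R^\top P_R]\,H=X_0H,
\]
using the block structure of $P_0$ after ordering the rows of $V_S$ as $L$ then $R$. The same identity holds for $V^*$ and for $\D V$, since everything is linear.

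There is no real obstacle. The only point needing care is the edge case $r_L=0$ or $r_R=0$. Lemma~\ref{lem:positive-leaves} excludes it under non-flatness; otherwise one interprets the corresponding blocks as empty matrices and the identities hold trivially.
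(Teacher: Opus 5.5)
Your proposal is correct and follows essentially the same route as the paper. Both arguments rest on the same range inclusions: the columns of $A[L,R]$ lie in $\im P_L$ and its rows in $\im P_R$, and $\im P_S=\im A[S,S^c]\subseteq\im P_0$. Full column rank of $H$ is then inherited from $P_S$, and $X_S=X_0H$ follows by multiplying by $V_S^\top$. Your explicit left-inverse formulas for $\Gamma$ and $H$, and your use of the full column rank of $Q_S$ to justify $\im P_S=\im A[S,S^c]$, make precise steps that the paper leaves implicit.
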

\begin{proof}
The matrix $A[L,R]$ is part of the cut matrix for $L$. Its columns
therefore lie in $\im P_L$. Applying the same argument to the cut for
$R$ shows that its rows lie in the row space of $P_R^\top$. These two
inclusions give $A[L,R]=P_L\Gamma P_R^\top$ for some $\Gamma$.
Similarly, the rows of $P_S$ indexed by $L$ can be written as $P_L$
times a coefficient matrix, and the rows indexed by $R$ can be written
using $P_R$. Stacking these two coefficient matrices gives $H$.
The full column rank of $P_S=P_0H$ implies the full column rank of $H$.
Multiplication by $V_S^\top$ proves \eqref{eq:aggregate-merge}.
\end{proof}

\begin{lemma}[Alignment across a rank-one cut]\label{lem:scalar-alignment}
Assume $m\ge2$. At a non-flat SOSP, let $A[S,S^c]=pq^\top$ be a nonzero rank-one cut.
Put $u^*=\sum_{i\in S}p_i v_i^*$ and
$w^*=\sum_{j\notin S}q_jv_j^*$. Then
\begin{equation}\label{eq:scalar-alignment}
 u^*\ne0,\qquad w^*\ne0,\qquad w^*=\lambda_Su^*,\quad\lambda_S>0.
\end{equation}
\end{lemma}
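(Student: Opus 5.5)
The plan is to combine first-order stationarity, summed over $S$, with a rigid rotation of $S$ that non-flatness controls. By Lemma~\ref{lem:energy}, $\sum_j A_{ij}v_j^*=\mu_i v_i^*$ for every $i$. Split the crossing energy as in \eqref{eq:aggregate-crossing}: it equals $-\ip{u^*}{w^*}$ when $S$ moves and $S^c$ is fixed, and the interaction of $S$ with $S^c$ depends on $S$ only through $u$.

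\textbf{Step 1: $u^*$ and $w^*$ are parallel.} Rotate all of $S$ rigidly, $v_i(t)=e^{t\Omega}v_i^*$ for $i\in S$, and keep $S^c$ fixed. The internal energy of $S$ and of $S^c$ is invariant, so the only changing term is
\[
E(t)=\text{const}-\ip{e^{t\Omega}u^*}{w^*}.
\]
Since $V^*$ is stationary, $E'(0)=-\ip{\Omega u^*}{w^*}=0$ for every skew $\Omega$. Taking $\Omega=w^*(u^*)^\top-u^*(w^*)^\top$ gives
\[
\ip{\Omega u^*}{w^*}=\norm{u^*}^2\norm{w^*}^2-\ip{u^*}{w^*}^2=0.
\]
Hence $u^*$ and $w^*$ are linearly dependent.

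\textbf{Step 2: both are nonzero.} Suppose $u^*=0$. Then $E(t)$ is constant for every $\Omega$, so non-flatness forces every inner product $\ip{e^{t\Omega}v_i^*}{v_j^*}$ with $i\in S$, $j\in S^c$ to be constant. As in the proof of Lemma~\ref{lem:positive-leaves}, with $m\ge2$ one can choose $\Omega$, a plane rotation, so that some such inner product is nonconstant; this is a contradiction. The same argument, rotating $S^c$ instead, gives $w^*\ne0$. Thus $w^*=\lambda_S u^*$ with $\lambda_S\neq0$.

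\textbf{Step 3: $\lambda_S>0$ (the main obstacle).} I would use the second-order condition. With $\Omega$ a rotation in a plane containing $u^*$, the energy along the rigid rotation is
\[
E(t)=c-\lambda_S\norm{u^*}^2\cos t .
\]
This curve lies on the product of spheres, so at the stationary point $V^*$ its second derivative equals the Riemannian Hessian quadratic form evaluated at the velocity (the form of Lemma~\ref{lem:energy}). Therefore the second-order condition gives $E''(0)=\lambda_S\norm{u^*}^2\ge0$, and since $\lambda_S\ne0$ we get $\lambda_S>0$.

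An alternative route to the sign would be $\ip{u^*}{w^*}=\sum_{i\in S}\mu_i-(\text{internal terms of }S)$, but that does not obviously yield a sign. The rigid-rotation computation handles the sign cleanly and is the route I would take.
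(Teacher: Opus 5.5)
Your proof is correct and follows essentially the same route as the paper: first-order stationarity under a rigid rotation of $S$ gives parallelism, non-flatness rules out vanishing aggregates, and the second derivative $\lambda_S\norm{u^*}^2\ge0$ along a planar rotation fixes the sign. The differences are cosmetic. Your skew matrix $w^*(u^*)^\top-u^*(w^*)^\top$ reduces parallelism to the Cauchy--Schwarz equality case, so it does not need $u^*\ne0$ in advance, whereas the paper's choice divides by $\norm{u^*}^2$. You also exclude $w^*=0$ by rotating $S^c$, whereas the paper rotates $S$ again.
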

\begin{proof}
If either aggregate were zero, rotating all vectors in $S$ together
would preserve the crossing energy $-\ip{u^*}{w^*}$ and all internal
energies. We could choose a rotation that changes an inner product
between a vector in $S$ and one in $S^c$, violating non-flatness.
Thus both aggregates are nonzero.

Next, rotate the vectors in $S$ by $e^{t\Omega}$ while fixing $S^c$.
The first-order condition gives
$\ip{\Omega u^*}{w^*}=0$. If $w^*$ had a nonzero component
$a\perp u^*$, choose
$\Omega=(a(u^*)^\top-u^*a^\top)/\norm{u^*}^2$ so that
$\Omega u^*=a$. The first-order condition would then give
$\norm{a}^2=0$, a contradiction. Thus $w^*=\lambda_Su^*$ for some scalar
$\lambda_S$. To determine its sign, rotate $S$ at unit angular speed in
a plane containing $u^*$. The second derivative of the crossing energy
is $\lambda_S\norm{u^*}^2$, which must be nonnegative. Since both
aggregates are nonzero, $\lambda_S>0$.
\end{proof}

\section{Splitting and lower bounding energy changes}\label{sec:merge}

Fix a non-flat second-order stationary point $V^*$ and an arbitrary
feasible configuration $V$. We will prove that $E(V)\ge E(V^*)$.

\subsection{The split of energy changes}\label{sec:identities}

We first compare moving two disjoint sets $L$ and $R$ together with
moving each set separately from the same original configuration $V^*$.
The difference is a cross term involving the changes on both sides.
Taking $L\cup R=[n]$ then gives a decomposition of the total energy
change from $V^*$ to $V$.

To better present this decomposition, we start with some notation.
The matrix $V\in\R^{n\times m}$ records the vectors at the $n$ vertices:
its $i$th row is $v_i^\top$, where $v_i\in\R^m$ and $\norm{v_i}=1$.
For $S\subseteq[n]$, let $V_S$ consist of the rows indexed by $S$.
The total energy of this configuration is
\[
 E(V)=-\sum_{1\le i<j\le n}A_{ij}\ip{v_i}{v_j}
     =-\frac12\tr(V^\top AV).
\]
Define the energy of the edges within $S$ by
\[
 E_S(V_S)=-\sum_{\substack{i,j\in S\\i<j}}A_{ij}\ip{v_i}{v_j},
\]
and the energy of the edges joining $S$ to $S^c$ by
\begin{equation}\label{eq:subset-energies}
 E_{S\leftrightarrow S^c}(V_S,V_{S^c})
 =-\sum_{i\in S,\,j\in S^c}A_{ij}\ip{v_i}{v_j}.
\end{equation}
The total energy is therefore
\begin{equation}\label{eq:energy-partition}
 E(V)=E_S(V_S)+E_{S^c}(V_{S^c})
      +E_{S\leftrightarrow S^c}(V_S,V_{S^c}).
\end{equation}
The following figure illustrates how the cut $(S,S^c)$ decomposes
$E(V)$ into these three terms.
\begin{center}
\begin{tikzpicture}[
 x=1cm,y=1cm,line width=.6pt,
 every node/.style={font=\small},
 vertex/.style={circle,fill=black,inner sep=1.6pt}]
\draw (-2.5,0) ellipse (1.05 and .75);
\draw (2.5,0) ellipse (1.05 and .75);
\node at (-2.5,1.05) {$S$};
\node at (2.5,1.05) {$S^c$};

\coordinate (la) at (-3,.25);
\coordinate (lb) at (-2,.35);
\coordinate (lc) at (-2.5,-.4);
\coordinate (ra) at (2,.35);
\coordinate (rb) at (3,.25);
\coordinate (rc) at (2.5,-.4);
\draw (la)--(lb)--(lc)--cycle;
\draw (ra)--(rb)--(rc)--cycle;
\draw (lb)--(ra);
\draw (lc)--(rc);
\foreach \p in {la,lb,lc,ra,rb,rc}
 \node[vertex] at (\p) {};

\node at (-2.5,-1.08) {$E_S(V_S)$};
\node at (2.5,-1.08) {$E_{S^c}(V_{S^c})$};
\node at (0,1.05) {$E_{S\leftrightarrow S^c}(V_S,V_{S^c})$};
\draw[->] (0,.78)--(0,.39);
\end{tikzpicture}
\end{center}
Fix an original configuration $V^*$ and a new feasible configuration $V$,
and write $\D v_i=v_i-v_i^*$.
Moving only $S$ means replacing $v_i^*$ by $v_i$ for $i\in S$, while
keeping all vectors in $S^c$ at their original values.
We denote the total energy of this configuration by
$E(V_S,V_{S^c}^*)$ and define
\begin{equation}\label{eq:delta-def}
\begin{aligned}
 \D E_S
 &=E(V_S,V_{S^c}^*)-E(V_S^*,V_{S^c}^*)\\
 &=\bigl[E_S(V_S)-E_S(V_S^*)\bigr]\\
 &\quad+\bigl[E_{S\leftrightarrow S^c}(V_S,V_{S^c}^*)
   -E_{S\leftrightarrow S^c}(V_S^*,V_{S^c}^*)\bigr].
\end{aligned}
\end{equation}
The energy of the edges entirely within $S^c$ remains unchanged and
cancels in the difference. Thus, $\D E_S$ consists of the internal
energy change within $S$ and the energy change across the cut. In particular,
$\D E_{[n]}=E(V)-E(V^*)$.

\begin{lemma}\label{lem:finite-energy}
For disjoint vertex sets $L,R\subseteq[n]$, one has
\begin{equation}\label{eq:intro-merge}
 \D E_{L\cup R}=\D E_L+\D E_R
 -\sum_{i\in L,\,j\in R}A_{ij}\ip{\D v_i}{\D v_j}.
\end{equation}
\end{lemma}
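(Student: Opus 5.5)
The identity is purely algebraic, and I will prove it by a direct expansion of the energy, without using stationarity. Since $E$ is a quadratic form in the configuration, the energy change from moving a set can be written exactly in terms of the increments $\D v_i$. The only piece that does not split additively over $L$ and $R$ is the bilinear interaction between increments on $L$ and increments on $R$.

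Write $S=L\cup R$, and let $\tilde V$ be the configuration with $\tilde v_i=v_i^*+\D v_i$ for $i\in S$ and $\tilde v_i=v_i^*$ otherwise. Then
\[
 \D E_S=E(\tilde V)-E(V^*)=-\sum_{i<j}A_{ij}\bigl(\ip{\tilde v_i}{\tilde v_j}-\ip{v_i^*}{v_j^*}\bigr).
\]
For each pair $\{i,j\}$, set $\delta_i=\D v_i$ if $i\in S$ and $\delta_i=0$ otherwise. Bilinearity gives
\[
 \ip{\tilde v_i}{\tilde v_j}-\ip{v_i^*}{v_j^*}=\ip{\delta_i}{v_j^*}+\ip{v_i^*}{\delta_j}+\ip{\delta_i}{\delta_j}.
\]
Feasibility of the intermediate configurations is not needed, because the identity concerns only the values of the quadratic function $E$.

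Next I sort the pairs $\{i,j\}$ by where the endpoints lie. The linear terms $\ip{\delta_i}{v_j^*}$ are additive in the moved set: summing them for $S$ gives exactly the sum of the corresponding terms for $L$ and for $R$. This holds because $\delta_i$ for $S$ equals $\delta_i$ for $L$ plus $\delta_i$ for $R$, and $L,R$ are disjoint. The quadratic terms $\ip{\delta_i}{\delta_j}$ are nonzero only when both endpoints lie in the moved set.

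For $S$, the quadratic terms cover pairs inside $L$, pairs inside $R$, and pairs with one endpoint in each. The pairs inside $L$ reproduce the quadratic part of $\D E_L$, and likewise for $R$. The mixed pairs are unordered pairs $\{i,j\}$ with $i\in L$ and $j\in R$. Since $A$ is symmetric, they contribute
\[
 -\sum_{i\in L,\,j\in R}A_{ij}\ip{\D v_i}{\D v_j}.
\]
Combining these contributions yields \eqref{eq:intro-merge}.

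The only point needing care is this bookkeeping. The sum over $i<j$ must be converted into a sum over $i\in L$, $j\in R$ without a factor of two. Each cross pair appears exactly once as an unordered pair, so using the symmetry of $A$ gives the coefficient $1$ as stated.
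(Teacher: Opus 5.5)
Your proof is correct. It is the same elementary bilinearity computation as the paper's, organized differently.

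You expand every pair into linear and quadratic parts in the increments, so the identity becomes a polarization statement. The linear part is additive over the disjoint sets. The quadratic part $q(\delta)=-\sum_{i<j}A_{ij}\ip{\delta_i}{\delta_j}$ satisfies $q(\delta^L+\delta^R)-q(\delta^L)-q(\delta^R)=-\sum_{i\in L,\,j\in R}A_{ij}\ip{\D v_i}{\D v_j}$, where $\delta^L,\delta^R$ denote the increments supported on $L$ and $R$.

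The paper instead argues sequentially. It moves $L$ first and then $R$, and observes that this second move differs from moving $R$ directly from $V^*$ only on the edges joining $L$ to $R$. On those edges the change $-A_{ij}\ip{v_i}{\D v_j}$ replaces $-A_{ij}\ip{v_i^*}{\D v_j}$.

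The paper's route localizes the correction to the cross edges from the start and never expands the other pairs. Yours exposes the identity as polarization of a quadratic form, and makes explicit where the symmetry of $A$ is used to count each cross pair exactly once.
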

\begin{proof}
Starting from $V^*$, first move the vectors in $L$ to their new values,
and then move those in $R$. The first move changes the energy by
$\D E_L$, so the second changes it by $\D E_{L\cup R}-\D E_L$.

Compare this second change with $\D E_R$, which is obtained by moving
$R$ directly from $V^*$. The only difference is that the vectors in $L$
are now $v_i$ rather than $v_i^*$. Edges within $R$ and edges joining
$R$ to $[n]\setminus(L\cup R)$ therefore have the same energy changes
in both cases. For an edge joining $i\in L$ to $j\in R$, the change is
$-A_{ij}\ip{v_i}{\D v_j}$ after $L$ has moved, compared with
$-A_{ij}\ip{v_i^*}{\D v_j}$ when starting from $V^*$. Their difference is
\[
 -A_{ij}\ip{v_i}{\D v_j}
 +A_{ij}\ip{v_i^*}{\D v_j}
 =-A_{ij}\ip{\D v_i}{\D v_j}.
\]
Summing over the edges between $L$ and $R$ gives
\[
 \D E_{L\cup R}-\D E_L-\D E_R
 =-\sum_{i\in L,\,j\in R}A_{ij}\ip{\D v_i}{\D v_j},
\]
which rearranges to \eqref{eq:intro-merge}.
\end{proof}

\subsection{Lower bounding \texorpdfstring{$\Delta E_S$}{the single-set energy change}}\label{subsec:induction-lemma}

\begin{lemma}[Lower bound when only a subtree moves]\label{lem:block-bound}
Let $V^*$ be a non-flat second-order stationary point under the
hypotheses of Theorem~\ref{thm:main}. We can choose a rank-decomposition
of width at most $k$ and a root with the following property. Let $S$ be
the set of leaves below any nonroot vertex, and choose any
full-column-rank factorization \eqref{eq:cut}. Then there is a fixed
symmetric positive definite matrix $T_S\in\R^{r_S\times r_S}$ such that
\begin{equation}\label{eq:block-bound}
 \D E_S\ge\frac12\tr(\D X_ST_S\D X_S^\top)
 \qquad\text{for every feasible new configuration.}
\end{equation}
\end{lemma}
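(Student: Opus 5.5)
The plan is to first turn $\D E_S$ into an exact quadratic form and then run the induction on the rooted rank-decomposition at the level of matrices.

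\textbf{Step 1: an exact formula for $\D E_S$.} Since the vectors in $S^c$ stay at their original values,
\[
\D E_S=-\tfrac12\tr(\D V_S^\top A[S,S]\D V_S)-\tr\bigl(\D V_S^\top (AV^*)_S\bigr).
\]
By Lemma~\ref{lem:energy}, $(AV^*)_S=\diag(\mu_S)V_S^*$. For unit vectors, $-\ip{\D v_i}{v_i^*}=\tfrac12\norm{\D v_i}^2$. Combining these, I expect
\[
\D E_S=\tfrac12\tr\bigl(\D V_S^\top M[S,S]\,\D V_S\bigr)
\]
exactly, for every feasible $V$. Because $\D X_S=\D V_S^\top P_S$, it then suffices to prove the matrix inequality
\[
M[S,S]\succeq P_S T_S P_S^\top\qquad\text{for some } T_S\succ0 .
\]
This is stronger than \eqref{eq:block-bound} and does not depend on feasibility. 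I would also check that the inequality does not depend on the chosen factorization: a different full-column-rank factorization changes $P_S$ to $P_SG$ with $G$ invertible, and we can replace $T_S$ by $G^{-1}T_SG^{-\top}$.

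\textbf{Step 2: base case.} For a leaf $i$, the cut $A[\{i\},\{i\}^c]$ is nonzero by Lemma~\ref{lem:positive-leaves}. Hence $r_S=1$ and $P_S=(p)$ with $p\ne0$. Lemma~\ref{lem:positive-leaves} also gives $\mu_i>0$, so $T_S=\mu_i/p^2$ works, and in fact with equality.

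\textbf{Step 3: inductive step.} Let $S=L\sqcup R$. Using \eqref{eq:compatible}, write
\[
M[S,S]=\begin{pmatrix}M[L,L]&-P_L\Gamma P_R^\top\\-P_R\Gamma^\top P_L^\top&M[R,R]\end{pmatrix}.
\]
The induction hypothesis then gives $M[S,S]\succeq P_0KP_0^\top$, where
\[
K=\begin{pmatrix}T_L&-\Gamma\\-\Gamma^\top&T_R\end{pmatrix}.
\]
This is the matrix form of Lemma~\ref{lem:finite-energy}, since the cross term equals $-\tr(\D X_L\Gamma\D X_R^\top)$. Because $P_S=P_0H$, it remains to find $T_S\succ0$ with $K\succeq HT_SH^\top$. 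Such a $T_S$ exists if and only if $K\succeq0$ and $\im H\subseteq\im K$; in that case a small multiple $T_S=\varepsilon I$ works. To keep the slack needed later, I would take $T_L$ and $T_R$ to be the largest admissible choices, namely the shorted operators of $M[L,L]$ onto $\im P_L$ (and similarly for $R$), rather than arbitrary ones. The statement's freedom to ``choose a decomposition and a root'' would be used to make the relevant cut factorizations compatible.

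\textbf{Step 4, the main obstacle: $K\succeq0$ and $\ker K\subseteq\ker H^\top$.} I would argue both by contradiction, using block-rotation tangent directions $\xi_i=\Omega_Lv_i^*$ for $i\in L$ and $\xi_i=\Omega_Rv_i^*$ for $i\in R$. These are automatically tangent. Their second variation $\tr(\Xi^\top M\Xi)$ can be expressed through the aggregates $X_0^*\Omega$, at least when the shorted-operator choice of $T_L$ and $T_R$ is tight. The rank bound $r_L+r_R\le2k\le m$ is what should let me pick skew matrices $\Omega_L,\Omega_R$ realizing a prescribed direction in aggregate space.

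\emph{Positive semidefiniteness.} A vector $z$ with $z^\top Kz<0$ would give a tangent direction with negative second variation, contradicting \eqref{eq:foc-soc}.

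\emph{Kernel condition.} A vector $z\in\ker K$ with $H^\top z\ne0$ would give a block-rotation path on which the energy is exactly constant, as in the proofs of Lemma~\ref{lem:positive-leaves} and Lemma~\ref{lem:scalar-alignment}. That path still changes an inner product across the cut $(S,S^c)$, which contradicts non-flatness (Definition~\ref{def:nonflat}).

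I expect realizing arbitrary aggregate directions by skew matrices, and checking that equality-type tightness survives the shorted-operator choices, to be the delicate part. This is exactly where the hypothesis $m\ge2k$ is consumed.
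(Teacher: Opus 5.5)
Several parts of your plan agree with the paper: the reduction to the matrix invariant $M[S,S]\succeq P_ST_SP_S^\top$, the base case, the shorted-operator choice of $T_L,T_R$, and the target conditions $K\succeq0$, $\im H\subseteq\im K$. The paper's $J$ is your $K$, and it carries exactly $M[S,S]=C_S+P_ST_SP_S^\top$ with $C_S\succeq0$.

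The gap is Step~4, which is where all the work lies, and the mechanism you propose cannot do it. With shorted operators one has $C_IV_I^*=0$, so a block rotation has second variation $\tr(YKY^\top)$ with $Y=[\Omega_LX_L^*\ \Omega_RX_R^*]$. To isolate $x^\top Kx$ you need $Y=yx^\top$. Since $(X_I^*)^\top\Omega_IX_I^*$ is skew, and a skew matrix of rank at most one vanishes, this forces $y\perp\im X_L^*+\im X_R^*$. It also requires $X_L^*,X_R^*$ to have full column rank. The count $r_L+r_R\le2k\le m$ guarantees neither.

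Two cases show the failure concretely. For $k=1$, $m=2$ with $u_L^*,u_R^*$ independent (a case the paper must treat), rotations certify only $T_LT_R\norm{u_L^*}^2\norm{u_R^*}^2\ge\Gamma^2\ip{u_L^*}{u_R^*}^2$. This is strictly weaker than $T_LT_R\ge\Gamma^2$. When $\rank V^*=m=2k$, the merge $S_1\cup S_2$ of Lemma~\ref{lem:root} has $[X_1^*\ X_2^*]$ invertible and $U_S=\R^m$, leaving no room for any test of this kind.

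Your kernel argument also fails as stated. A vector $x\in\ker K$ yields zero second variation, not a path along which the energy is exactly constant, so Definition~\ref{def:nonflat} cannot be invoked.

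The paper supplies ingredients absent from your plan. Where room exists, it tests with non-rigid motions $\xi_i=z_ie$, with $e\perp U_S$ and $z=Z_0x$. Here $P_0^\top Z_0=I$ and $C_0Z_0=0$ come from the shorted-operator construction. The second variation is then exactly $x^\top Kx$ for every $x$, with no rank condition on $X_I^*$. The kernel inclusion comes from the vanishing of the mixed Hessian term against single exterior-node perturbations. This gives $Q_SH^\top x=A[S^c,S]Z_0x=0$, and it needs either a globally perpendicular $e$ or $\dim U_S^\perp\ge2$.

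Where room is absent, the paper uses first-order stationarity instead. For $k=1$, the relations \eqref{eq:scalar-two-relations} force $Q=0$ when $u_L^*,u_R^*$ are independent. For $k\ge2$, Lemmas~\ref{lem:confinement} and~\ref{lem:root} show that $\rank V^*=m$ forces $m=2k$. Once the root is placed beside the special trivalent vertex, $S_1\cup S_2$ is the only nonroot merge without $\dim U_S^\perp\ge2$. At that merge, three facts give $K=HT_SH^\top$ exactly: \eqref{eq:J-stationarity}, the invertibility of $X_0^*$, and $T_S=T_3^{-1}$ imported from the complementary branch $S_3$.

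That is what the freedom to choose the root is for. Compatible cut factorizations come for free from Lemma~\ref{lem:compatible} under any rooting.
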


\subsubsection{Rank-width one}\label{subsubsec:rank-one-bound}

\begin{proof}[Proof of Lemma~\ref{lem:block-bound} for $k=1$]
If $n=1$, there are no nonroot subtrees, so there is nothing to prove.
Hence assume $n\ge2$.
Since $\rw(A)\le1$, there exists a rank-decomposition tree in which
every edge induces a cut matrix of rank at most one. Every such cut is
nonzero by Lemma~\ref{lem:positive-leaves}, and hence has rank one. Thus
$r_S=1$, $X_S$ consists of a single aggregate vector $u_S$, and
\eqref{eq:block-bound} reduces to
\[
 \D E_S\ge\frac{\lambda_S}{2}\norm{\D u_S}^2,
 \qquad T_S=[\lambda_S].
\]
Remove degree-two internal vertices as described in
Section~\ref{subsec:rank-width}. Insert the root on any edge of the tree.
Each nonroot vertex represents the set $S$ of leaves below it, and each
internal vertex has two children. For each such $S$, write
$A[S,S^c]=p_Sq_S^\top$ and set
\[
 u_S=\sum_{i\in S}(p_S)_iv_i,
 \qquad w_S^*=\sum_{j\in S^c}(q_S)_jv_j^*.
\]
Lemma~\ref{lem:scalar-alignment} gives a fixed positive coefficient
$\lambda_S$ satisfying $w_S^*=\lambda_Su_S^*$. We prove the scalar
bound above with these coefficients.

\smallskip\noindent\textit{Base case.}
Let $S=\{i\}$ be the vertex set corresponding to a leaf.
The original aggregate vector is $u_S^*=(p_S)_iv_i^*$, and the new
aggregate vector is $u_S=(p_S)_iv_i$. Since $v_i^*$ and $v_i$ are
unit vectors and the weight $(p_S)_i$ is fixed,
\[
 \norm{u_S^*}=\norm{u_S}=|(p_S)_i|.
\]
There are no edges within $S$, so only the energy across the cut changes.
Using $w_S^*=\lambda_Su_S^*$, we obtain
\[
 \D E_S=-\ip{\D u_S}{w_S^*}
       =-\lambda_S\ip{\D u_S}{u_S^*}
       =\frac{\lambda_S}{2}\norm{\D u_S}^2.
\]
The last equality follows by expanding
$\norm{\D u_S}^2=\norm{u_S-u_S^*}^2$ and using
$\norm{u_S}=\norm{u_S^*}$. Thus the desired bound holds with equality
for a single vertex.

\smallskip\noindent\textit{Induction hypothesis.}
Let $S=L\sqcup R$ and assume the scalar bounds hold for both
children with their alignment coefficients $\lambda_L,\lambda_R$.

\smallskip\noindent\textit{Inductive step.}
Lemma~\ref{lem:compatible} gives scalar coefficients $c,d,\gamma$
such that $u_S=cu_L+du_R$ and
$A[L,R]=\gamma p_Lp_R^\top$. Using Lemma~\ref{lem:finite-energy} and
the two child bounds, we obtain
\begin{align*}
 \D E_S
 &=\D E_L+\D E_R-\gamma\ip{\D u_L}{\D u_R}\\
 &\ge\frac{\lambda_L}{2}\norm{\D u_L}^2
     +\frac{\lambda_R}{2}\norm{\D u_R}^2
     -\gamma\ip{\D u_L}{\D u_R}\\
 &=\frac{\lambda_S}{2}\norm{\D u_S}^2+Q.
\end{align*}
The two child bounds give the desired bound for $S$ plus a remainder
$Q$. It is therefore enough to prove $Q\ge0$. Expanding
$\norm{\D u_S}^2=\norm{c\D u_L+d\D u_R}^2$ gives
\begin{equation}\label{eq:scalar-merge}
 Q=\frac a2\norm{\D u_L}^2+\frac b2\norm{\D u_R}^2
       -\kappa\ip{\D u_L}{\D u_R},
\end{equation}
where
\[
 a=\lambda_L-\lambda_Sc^2,\quad
 b=\lambda_R-\lambda_Sd^2,\quad
 \kappa=\gamma+\lambda_Scd.
\]
The cut factorizations in Lemma~\ref{lem:compatible} give
$w_L^*=\gamma u_R^*+cw_S^*$ and
$w_R^*=\gamma u_L^*+dw_S^*$. Applying
Lemma~\ref{lem:scalar-alignment} to the three cuts yields
\[
 \gamma u_R^*+cw_S^*=\lambda_Lu_L^*,\quad
 \gamma u_L^*+dw_S^*=\lambda_Ru_R^*,\quad
 w_S^*=\lambda_S(cu_L^*+du_R^*).
\]
Substitute the third relation into the first two and collect the
coefficients of $u_L^*$ and $u_R^*$. This gives
\begin{equation}\label{eq:scalar-two-relations}
 a u_L^*=\kappa u_R^*,\qquad b u_R^*=\kappa u_L^*.
\end{equation}
If $u_L^*,u_R^*$ are linearly independent, the first equation forces
$a=\kappa=0$, and the second then gives $b=0$. Thus
$a=b=\kappa=0$, so $Q=0$.

If they are parallel, write $u_L^*=\rho e$, $u_R^*=\sigma e$, where
$\norm{e}=1$ and $\rho\sigma\ne0$; $\rho,\sigma$ may have either sign.
Set $\eta=\kappa/(\rho\sigma)$.
Equation~\eqref{eq:scalar-two-relations} gives
$a=\eta\sigma^2$, $b=\eta\rho^2$, and therefore
\begin{equation}\label{eq:scalar-Q-square}
 Q=\frac\eta2\norm{\sigma\D u_L-\rho\D u_R}^2.
\end{equation}
It remains to show that the coefficient $\eta$ is nonnegative.
Choose a two-dimensional plane containing $e$, and let $\mathcal R_t$
rotate through angle $t$ in this plane while fixing its orthogonal
complement. Apply $\mathcal R_{\alpha t}$ to every vector in $L$ and
$\mathcal R_{\beta t}$ to every vector in $R$, keeping the exterior fixed.
The node vectors need not all lie in the selected plane; their aggregate
vectors do. Internal energies of the two children remain unchanged.
The varying energy is
\[
 -\gamma\ip{\mathcal R_{\alpha t}u_L^*}{\mathcal R_{\beta t}u_R^*}
 -c\ip{\mathcal R_{\alpha t}u_L^*}{w_S^*}
 -d\ip{\mathcal R_{\beta t}u_R^*}{w_S^*}.
\]
At $t=0$, the second-order condition and the alignment equations give
\[
 0\le\lambda_L\rho^2\alpha^2+\lambda_R\sigma^2\beta^2
       -2\gamma\rho\sigma\alpha\beta.
\]
Choose $\alpha=d/\rho$ and $\beta=-c/\sigma$ to cancel the terms
containing $\lambda_S$. Then
\begin{align*}
 0&\le\lambda_Ld^2+\lambda_Rc^2+2\gamma cd\\
  &=ad^2+bc^2+2\kappa cd
   =\eta(c\rho+d\sigma)^2.
\end{align*}
By Lemma~\ref{lem:scalar-alignment}, the parent aggregate
$u_S^*=(c\rho+d\sigma)e$ is nonzero. Hence $\eta\ge0$ and
\eqref{eq:scalar-Q-square} gives $Q\ge0$, proving the parent bound.
This completes the induction for $k=1$.

\end{proof}

\subsubsection{Rank-width \texorpdfstring{$k\ge2$}{k >= 2}}\label{subsubsec:matrix-bound}

We first establish two geometric facts used in the induction.
The merge argument below uses directions perpendicular to the original
vectors in the set being merged. The first lemma bounds the dimension spanned by the vectors on one
side of a cut. The second uses this bound to choose a root so that
these perpendicular directions are available at all but one nonroot
merge when the original configuration spans $\R^m$.
Write $U_S=\Span\{v_i^*:i\in S\}$ for the space spanned by the original
vectors in $S$. Its orthogonal complement $U_S^\perp$ consists of the
vectors perpendicular to every $v_i^*$ with $i\in S$.

\begin{lemma}[The vectors in a set span the same space as its external forces]\label{lem:confinement}
Let $V^*$ be a non-flat stationary point. Suppose
$\rho_A(S)\le k$, $m\ge k+2$, and $\dim U_{S^c}>k$.
For each $i\in S$, let $h_i$ be the force from the vectors outside $S$,
and let $W$ be the space spanned by these forces:
\[
 h_i=\sum_{j\notin S}A_{ij}v_j^*,\qquad
 W=\Span\{h_i:i\in S\}.
\]
Then
\begin{equation}\label{eq:confinement}
 U_S=W\subseteq U_{S^c},\qquad \dim U_S\le k.
\end{equation}
\end{lemma}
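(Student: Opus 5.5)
The plan is to prove the three relations separately. The inclusions $W\subseteq U_{S^c}$ and $W\subseteq U_S$ follow directly from the definitions and first-order stationarity. The bound $\dim W\le k$ follows from the rank of the cut. The remaining inclusion $U_S\subseteq W$ is the substantive one, and I will obtain it from non-flatness, using rotations that fix $W$ pointwise.

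\emph{Easy parts.} Each $h_i$ is a linear combination of the $v_j^*$ with $j\notin S$, so $W\subseteq U_{S^c}$. Take a cut factorization \eqref{eq:cut} with $r_S=\rho_A(S)\le k$; the degenerate case $r_S=0$ fits the same argument with $W=0$. Then
\[
 h_i=\sum_{a}(P_S)_{ia}(w_S^a)^*,
\]
so $W$ lies in the column space of $W_S^*$ and $\dim W\le r_S\le k$. First-order stationarity, $MV^*=0$ in Lemma~\ref{lem:energy}, gives
\[
 h_i=\mu_iv_i^*-\sum_{j\in S}A_{ij}v_j^*\in U_S,
\]
hence $W\subseteq U_S$. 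Once $U_S\subseteq W$ is shown, we get $U_S=W$ and $\dim U_S\le k$.

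\emph{Main step: $U_S\subseteq W$.} Suppose some $v_i^*$ with $i\in S$ has a nonzero component $y$ orthogonal to $W$. Take any skew-symmetric $\Omega$ that vanishes on $W$ and preserves $W^\perp$. Rotate every vector in $S$ by $e^{t\Omega}$ and fix $S^c$.
\begin{itemize}
\item The internal energy of $S$ is unchanged, since all inner products within $S$ are preserved.
\item The crossing energy is $-\sum_{i\in S}\ip{v_i^*}{e^{-t\Omega}h_i}$. Since $e^{-t\Omega}h_i=h_i$ for $h_i\in W$, this is constant.
\end{itemize}
So the energy is constant along this path. Non-flatness (Definition~\ref{def:nonflat}) then forces every $\ip{e^{t\Omega}v_i^*}{v_j^*}$ with $j\notin S$ to be constant. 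Because $e^{t\Omega}$ acts only on $W^\perp$, this inner product equals a constant plus $\ip{e^{t\Omega}y}{z}$, where $z$ is the $W^\perp$-component of $v_j^*$.

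\emph{Constructing a violating rotation.} Since $\dim U_{S^c}>k\ge\dim W$, some $j\notin S$ has $z\ne0$. Now choose $\Omega$ according to whether $y$ and $z$ are parallel.
\begin{itemize}
\item If $y,z$ are not parallel, take $\Omega=zy^\top-yz^\top$, which acts inside $W^\perp$. The derivative at $t=0$ is
\[
 \ip{\Omega y}{z}=\norm y^2\norm z^2-\ip yz^2>0.
\]
\item If $y\parallel z$, use $m\ge k+2$, which gives $\dim W^\perp\ge2$. Pick a unit vector $u\in W^\perp$ with $u\perp y$, and rotate in the plane spanned by $y$ and $u$. The inner product then becomes $\cos t\,\ip yz$ with $\ip yz\ne0$, which is not constant.
\end{itemize}
Either case contradicts non-flatness, so $U_S\subseteq W$.

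The only delicate point is this construction: the rotation must fix $W$ but move $y$ relative to some exterior vector. This is exactly where the hypotheses $\dim U_{S^c}>k$ and $m\ge k+2$ enter.
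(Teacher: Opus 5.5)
Your proof is correct and follows the paper's argument essentially step for step. You get $W\subseteq U_{S^c}$ from the definitions and $W\subseteq U_S$ from first-order stationarity. You get $U_S\subseteq W$ by rotating $S$ with a skew-symmetric $\Omega$ supported on $W^\perp$, namely $zy^\top-yz^\top$ when the projections are independent and a plane rotation when they are parallel, and then contradict non-flatness. The only difference is cosmetic: you bound $\dim W$ through the cut factorization, while the paper uses $\rank\bigl(A[S,S^c]V^*_{S^c}\bigr)\le k$.
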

\begin{proof}
The matrix whose rows are $h_i^\top$ is $A[S,S^c]V_{S^c}^*$.
Its rank is at most $k$, so $\dim W\le k$. Each $h_i$ is a linear
combination of exterior vectors, hence $W\subseteq U_{S^c}$.

We first show that every vector in $S$ belongs to $W$. Suppose instead
that some $v_i^*$ has a nonzero projection $a$ onto $W^\perp$.
Since $\dim U_{S^c}>k\ge\dim W$, some exterior vector also has a nonzero
projection $b$ onto $W^\perp$. Moreover, $m\ge k+2$ gives
$\dim W^\perp\ge2$, leaving room to rotate in this space.
There is a skew-symmetric operator $\Omega$ on $W^\perp$ such that
$\ip{e^{t\Omega}a}{b}$ is nonconstant. If $a,b$ are independent, take
$\Omega=ba^\top-ab^\top$, for which
\[
 \ip{\Omega a}{b}=\norm{a}^2\norm{b}^2-\ip{a}{b}^2>0.
\]
If $b=ca$ with $c\ne0$, rotate a plane containing $a$ inside $W^\perp$;
the inner product becomes $c\norm{a}^2\cos t$ after a choice of speed.

Extend $\Omega$ by zero on $W$ and rotate all nodes in $S$ with
$e^{t\Omega}$, fixing its exterior. Internal energies are unchanged,
and every crossing contribution $-\ip{e^{t\Omega}v_i^*}{h_i}$ is
unchanged because $h_i\in W$. However, the inner product between the
selected interior and exterior vectors changes. This contradicts
non-flatness, proving $U_S\subseteq W$.
For the reverse inclusion, first-order stationarity gives
\[
 h_i=\mu_i v_i^*-\sum_{j\in S}A_{ij}v_j^*\in U_S.
\]
This puts every $h_i$ in $U_S$, so $W\subseteq U_S$. Combining the two
inclusions proves \eqref{eq:confinement}.
\end{proof}

\begin{lemma}[Choosing the root when the original vectors span $\R^m$]\label{lem:root}
Suppose $k\ge2$, $m\ge2k$, $\rw(A)\le k$, and $V^*$ is a non-flat
stationary point with $\rank V^*=m$. Then $m=2k$.
Take any decomposition of width at most $k$ and remove its degree-two
internal vertices. There is an internal vertex whose three branches
have leaf sets
$S_1,S_2,S_3$ satisfying
\begin{equation}\label{eq:complementary}
 \dim U_{S_i}=k,\qquad
 U_{S_i}\oplus U_{S_j}=\R^{2k}\quad(i\ne j).
\end{equation}
Place the root on the edge separating $S_3$ from $S_1\cup S_2$.
Every nonroot merge then takes place in a set $S$ with
$\dim U_S^\perp\ge2$, except for
$S_1,S_2\longrightarrow S_1\cup S_2$.
\end{lemma}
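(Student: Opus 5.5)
The plan is to use Lemma~\ref{lem:confinement} at every edge of the tree and turn the result into an orientation of the edges. Since $k\ge2$ and $m\ge2k$, we have $m\ge k+2$, so Lemma~\ref{lem:confinement} applies to any cut side $S$ with $\dim U_{S^c}>k$. The hypothesis $\rank V^*=m$ gives $U_S+U_{S^c}=\R^m$ for every cut. Call a side $S$ \emph{small} if $\dim U_S\le k$.

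\textbf{Step 1 (each edge has a small side).} For an edge with sides $S,S^c$, suppose $\dim U_{S^c}>k$. Then Lemma~\ref{lem:confinement} gives $U_S\subseteq U_{S^c}$ with $\dim U_S\le k$, hence $U_{S^c}=\R^m$. So at every edge either exactly one side is small and the other spans $\R^m$, or both sides are small. In the second case $m\le 2k$, which forces $m=2k$ and the two spaces to be complementary.

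\textbf{Step 2 (leaves and orientation).} A leaf side $\{i\}$ has dimension $1\le k$. Its complement cannot also be small, since then $m\le k+1<2k$. Orient every edge that has exactly one small side toward its full side. Every leaf edge then points inward.

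\textbf{Step 3 (finding the vertex).} If there is no balanced edge, the tree is finite and every edge is oriented, so some internal vertex is a sink. Its three branches $S_1,S_2,S_3$ are then all small, and each complement is full. Apply Lemma~\ref{lem:confinement} to $S_3$, whose complement $S_1\cup S_2$ spans $\R^m$. This gives $U_{S_3}=W\subseteq U_{S_1}+U_{S_2}$, so $U_{S_1}+U_{S_2}=\R^m$. Since $\dim U_{S_1},\dim U_{S_2}\le k$ and $m\ge 2k$, we get $m=2k$, $\dim U_{S_i}=k$, and $U_{S_1}\oplus U_{S_2}=\R^{2k}$. The same argument for the other two pairs gives \eqref{eq:complementary}.

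\textbf{Main obstacle: balanced edges.} The hardest part is excluding a balanced edge, or showing that one still leads to such a vertex. Suppose $S|S^c$ is balanced, so $\dim U_S=\dim U_{S^c}=k$ and the two spaces are complementary. Then $S$ is not a leaf, because $1<k$. Let $S=S_a\sqcup S_b$ be its two subbranches. I would try to show that at least one subbranch, say $S_b$, has $U_{S_b}\not\subseteq U_{S^c}$; this holds because $U_{S_b}\subseteq U_S$ and $U_S\cap U_{S^c}=0$. It follows that $\dim U_{S_a^c}>k$. Lemma~\ref{lem:confinement} then places $U_{S_a}$ inside $U_{S_b}+U_{S^c}$, and symmetrically for $S_b$. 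From this I would aim either to produce a contradiction with non-flatness, or to show that the vertex at the end of the edge on the $S$ side is itself the required vertex, with branches $S_a,S_b,S^c$. If this analysis fails, the fallback is to walk from the balanced edge in the direction of the orientation until a sink appears.

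\textbf{Step 4 (rooting).} Place the root on the edge separating $S_3$ from $S_1\cup S_2$. Every nonroot set $S$ is then either contained in some $S_i$, or equal to $S_1\cup S_2$. If $S\subseteq S_i$, then $U_S\subseteq U_{S_i}$, so $\dim U_S^\perp\ge 2k-k=k\ge2$. This covers every merge inside $S_1$, inside $S_2$, and inside or forming $S_3$. The only remaining merge is $S_1,S_2\to S_1\cup S_2$, where $U_{S_1\cup S_2}=\R^{2k}$; this is the stated exception.
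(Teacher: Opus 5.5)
Your Steps 1--4 are sound, but the proof is incomplete exactly where you flag it. You never exclude a balanced edge, and neither of your suggested routes can work. Suppose $S|S^c$ were balanced, and let $x$ be its endpoint on the $S$ side, with $S=S_a\sqcup S_b$. As you observe, $S_a$ and $S_b$ are then small with full complements, so both of their edges point into $x$. Following arrows can therefore stop at $x$. Yet $x$ is not the required vertex: $U_{S_a}+U_{S_b}=U_S$ has dimension $k<2k$, so \eqref{eq:complementary} fails for the pair $S_a,S_b$. The balanced case must therefore be ruled out, not routed around. Lemma~\ref{lem:confinement} alone does not do this; you need non-flatness directly.

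The missing argument is short. Suppose $\dim U_S=\dim U_{S^c}=k$ and $U_S\cap U_{S^c}=\{0\}$. For $i\in S$, the force $h_i=\sum_{j\notin S}A_{ij}v_j^*$ lies in $U_{S^c}$ by definition. By first-order stationarity, $h_i=\mu_iv_i^*-\sum_{j\in S}A_{ij}v_j^*\in U_S$. Hence $h_i=0$ for every $i\in S$.

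Now rotate all of $S$ rigidly by $e^{t\Omega}$, fixing $S^c$. The internal energy of $S$ is unchanged, and the crossing energy $-\sum_{i\in S}\ip{e^{t\Omega}v_i^*}{h_i}=0$ stays constant. So the total energy is constant. Since $m\ge2$, you can choose a plane rotation that changes $\ip{v_i^*}{v_j^*}$ for some $i\in S$, $j\in S^c$, which contradicts non-flatness. With balanced edges excluded, every edge is oriented, and your sink argument in Step 3 completes the proof; this is then essentially the paper's argument.
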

\begin{proof}
We first show that every tree-edge cut has exactly one side whose
original vectors span more than $k$ dimensions. Since $m\ge2k\ge k+2$,
Lemma~\ref{lem:confinement} rules out the possibility that both sides
span more than $k$ dimensions.

Suppose both sides span at most $k$ dimensions. If $m>2k$, their sum
cannot be all of $\R^m$, contrary to $\rank V^*=m$. If $m=2k$, both
spaces must have dimension $k$ and their intersection must be zero.
For $i\in S$, the force $h_i$ belongs to $U_{S^c}$ by definition and
to $U_S$ by first-order stationarity. Hence $h_i=0$. Rotating $S$
together would then leave its energy unchanged, although a suitable
rotation changes an inner product across the cut. This contradicts
non-flatness.

Orient each edge toward its side of dimension greater than $k$.
Following the arrows in a finite tree must eventually lead to a vertex
with no outgoing edge. This vertex cannot be a leaf, since a single
vector spans only one dimension. It is therefore an internal vertex
with three incident edges, all pointing toward it. Each of its three
branches lies on the smaller side of its edge, so the original vectors
in each branch span at most $k$ dimensions.
For each branch, Lemma~\ref{lem:confinement} gives
\[
 U_{S_i}\subseteq U_{S_j}+U_{S_\ell}.
\]
Thus the vectors in any one branch lie in the space spanned by the other
two. Any two branches therefore span the entire original configuration,
which is $\R^m$. Each branch has dimension at most $k$, so $m\le2k$.
Together with $m\ge2k$, this gives $m=2k$. Each branch must have
dimension $k$, and the spaces of any two branches have zero intersection.
This proves \eqref{eq:complementary}.

Insert the root on the edge between $S_3$ and $S_1\cup S_2$.
Any subtree inside one branch has $\dim U_S\le k$, so
$\dim U_S^\perp\ge m-k=k\ge2$. Among the nonroot subtrees, the only
one that contains more than one branch is $S_1\cup S_2$. This is the
only merge that must be treated separately.
\end{proof}

\begin{figure}[htbp]
\centering
\begin{tikzpicture}[every node/.style={font=\small},line width=.6pt]
\node (root) at (0,2.2) {$[n]$};
\node (s) at (-2,1.2) {$S_1\cup S_2$};
\node (s3) at (2,1.2) {$S_3$};
\node (s1) at (-3,0) {$S_1$};
\node (s2) at (-1,0) {$S_2$};
\draw (root)--(s) (root)--(s3) (s)--(s1) (s)--(s2);
\node[anchor=east] at (-3.25,1.2) {$Q=0$};
\draw (-3,-.3)--(-3.55,-1.15)--(-2.45,-1.15)--cycle;
\draw (-1,-.3)--(-1.55,-1.15)--(-.45,-1.15)--cycle;
\draw (2,.9)--(1.45,.05)--(2.55,.05)--cycle;
\node at (-2,-1.55) {merges within a branch};
\node at (2,-.4) {merges within a branch};
\end{tikzpicture}
\caption{The root is placed on the edge separating $S_3$ from
$S_1\cup S_2$ so that the induction proceeds within each of the three
branches first. Within each branch, the original vectors span at most
$k$ dimensions, so every merge has at least two perpendicular directions
available. The only nonroot merge outside these branches is
$S_1,S_2\to S_1\cup S_2$; it is treated separately below, where we prove
$Q=0$.}
\label{fig:root}
\end{figure}

\begin{proof}[Proof of Lemma~\ref{lem:block-bound} for $k\ge2$]
If $n=1$, there are no nonroot subtrees, so there is nothing to prove.
Hence assume $n\ge2$.
Since $\rw(A)\le k$, there exists a rank-decomposition tree in which
every edge induces a cut matrix of rank at most $k$. Remove degree-two
internal vertices as described in Section~\ref{subsec:rank-width}.
If $\rank V^*<m$, choose any edge of this tree; if
$\rank V^*=m$, choose the edge specified by Lemma~\ref{lem:root}.
Insert a new vertex on the chosen edge and use it as the root.
We first prove the bound for each leaf, then use the bounds for two
children to prove the bound for their union.

\smallskip\noindent\textit{Base case.}
We first prove the bound for a single vertex $S=\{i\}$.
First-order stationarity gives
$\sum_jA_{ij}v_j^*=\mu_i v_i^*$, with $\mu_i>0$ by
Lemma~\ref{lem:positive-leaves}. Moving only this vertex therefore gives
\begin{align*}
 \D E_{\{i\}}
 &=-\left\langle v_i-v_i^*,\sum_jA_{ij}v_j^*\right\rangle\\
 &=-\mu_i\ip{v_i-v_i^*}{v_i^*}
   =\frac{\mu_i}{2}\norm{v_i-v_i^*}^2.
\end{align*}
The last equality uses $\norm{v_i}=\norm{v_i^*}=1$.
The cut factor is a nonzero scalar, $P_{\{i\}}=[p_i]$, so
$\D X_{\{i\}}=p_i\D v_i$. Taking
$T_{\{i\}}=[\mu_i/p_i^2]\succ0$ gives
\[
 \D E_{\{i\}}
 =\frac12\tr(\D X_{\{i\}}T_{\{i\}}\D X_{\{i\}}^\top),
\]
which establishes the base case with equality.

\smallskip\noindent\textit{Induction hypothesis.}
Let $S=L\sqcup R$, where $L$ and $R$ are the
leaf sets of its two children.
Assume that there exist fixed symmetric positive definite matrices
$T_L$ and $T_R$ such that
\[
 \D E_L\ge\frac12\tr(\D X_LT_L\D X_L^\top),
 \qquad
 \D E_R\ge\frac12\tr(\D X_RT_R\D X_R^\top)
\]
for every feasible new configuration.

\smallskip\noindent\textit{Inductive step.}
Lemma~\ref{lem:finite-energy} gives
\begin{align*}
 \D E_S
 &=\D E_L+\D E_R
   -\sum_{i\in L,\,j\in R}A_{ij}\ip{\D v_i}{\D v_j}\\
 &\ge\frac12\tr(\D X_LT_L\D X_L^\top)
     +\frac12\tr(\D X_RT_R\D X_R^\top)
     -\sum_{i\in L,\,j\in R}A_{ij}\ip{\D v_i}{\D v_j}.
\end{align*}
We want this right-hand side to be at least
$\tfrac12\tr(\D X_ST_S\D X_S^\top)$ for some fixed $T_S\succ0$.
Define the difference by
\begin{equation}\label{eq:merge-Q-expanded}
\begin{aligned}
 Q:={}&\frac12\tr(\D X_LT_L\D X_L^\top)
       +\frac12\tr(\D X_RT_R\D X_R^\top)\\
 &-\sum_{i\in L,\,j\in R}A_{ij}\ip{\D v_i}{\D v_j}
       -\frac12\tr(\D X_ST_S\D X_S^\top).
\end{aligned}
\end{equation}
Thus
\[
 \D E_S\ge\frac12\tr(\D X_ST_S\D X_S^\top)+Q.
\]
To obtain \eqref{eq:block-bound}, our goal boils down to finding a fixed
$T_S\succ0$ for which
$Q\ge0$ for every feasible new configuration.

Steps 1--3 establish the bound for a merge satisfying at least one of
the following conditions:
\begin{enumerate}
\item a nonzero vector is perpendicular to every original node vector;
\item $\dim U_S^\perp\ge2$.
\end{enumerate}
Both conditions give directions perpendicular to the original vectors
in $S$. We use them to test the second-order condition. Step 4 applies
the resulting bound throughout the tree, and Step 5 treats the one
remaining merge when the original vectors span $\R^m$.

\smallskip\noindent\textit{Step 1. We construct child coefficients satisfying \eqref{eq:invariant}.}
The induction hypothesis gives at least one positive definite coefficient
for each child. We will choose new coefficients that still satisfy the
child bounds. We also prove the matrix identities in
\eqref{eq:invariant}; these will let us apply the second-order
condition to the remainder $Q$.
For a child $I\in\{L,R\}$, denote the coefficient from the induction
hypothesis by $T_I^{(0)}$, and write $M_I=M[I,I]$.

We first express the exact energy change as a quadratic form in
$\D V_I$. Let $D$ equal $\D V_I$ on the rows in $I$ and zero on all
other rows, so $V^*+D$ is the configuration obtained by moving only $I$.
Expanding the energy gives
\[
 E(V^*+D)-E(V^*)
 =-\tr(D^\top AV^*)-\tfrac12\tr(D^\top AD).
\]
Write the $i$th row of $D$ as $d_i^\top$. Since both $v_i^*$ and
$v_i^*+d_i$ have unit length, $\ip{d_i}{v_i^*}=-\norm{d_i}^2/2$.
Using $AV^*=\diag(\mu)V^*$ in the linear term gives
\begin{equation}\label{eq:finite}
 \D E_I=\frac12\tr(\D V_I^\top M_I\D V_I).
\end{equation}

We next use small movements to turn the child bound into a matrix
inequality. Either condition (1) or (2) gives a unit vector $e$
perpendicular to every original vector in $I$. For any coefficients
$z\in\R^{|I|}$, move the vectors along
\[
 v_i(t)=\cos(tz_i)v_i^*+\sin(tz_i)e,\qquad i\in I,
\]
while fixing the exterior. Each vector stays on the unit sphere, and
its initial velocity is $z_i e$. Hence
$\D V_I(t)/t\longrightarrow ze^\top$. The child bound holds along this
curve. Substitute \eqref{eq:finite} into that bound, divide by $t^2$,
and let $t\to0$ to obtain
\[
 z^\top\bigl(M_I-P_IT_I^{(0)}P_I^\top\bigr)z\ge0.
\]
Since $z$ was arbitrary,
$M_I\succeq P_IT_I^{(0)}P_I^\top\succeq0$. If $M_Iz=0$, this inequality
and $T_I^{(0)}\succ0$ force $P_I^\top z=0$. Thus
$\ker M_I\subseteq\ker P_I^\top$: every column of $P_I$ is orthogonal
to $\ker M_I$ and therefore lies in the range of $M_I$.

The matrix $M_I$ may be singular, so we use its pseudoinverse
$M_I^\dagger$. For a positive semidefinite matrix, this replaces each
positive eigenvalue by its reciprocal and leaves each zero eigenvalue
at zero. Define the new coefficient $T_I$ and two auxiliary matrices by
\begin{equation}\label{eq:recover-coefficients}
 \begin{aligned}
 T_I&=(P_I^\top M_I^\dagger P_I)^{-1},\\
 C_I&=M_I-P_IT_IP_I^\top,\qquad
 Z_I=M_I^\dagger P_IT_I.
 \end{aligned}
\end{equation}
Here $C_I$ is the part of $M_I$ left after subtracting $P_IT_IP_I^\top$.
We will prove that $C_I\succeq0$, which ensures that the new coefficient
still gives a lower bound. The role of $Z_I$ is to produce perturbations
for which this remaining part contributes zero.

All three matrices depend only on $V^*$ and the fixed cut factors.
To check that $T_I$ exists and is positive definite, put
$D_I=M_I^{\dagger/2}P_I$, where $M_I^{\dagger/2}$ is the positive
semidefinite square root of $M_I^\dagger$. The columns of $P_I$ are
independent and belong to the range of $M_I$, so $D_I$ also has full
column rank. Therefore $T_I=(D_I^\top D_I)^{-1}\succ0$.
Moreover, $M_I^{1/2}D_I=P_I$, which gives
\[
 C_I=M_I^{1/2}
 \bigl[I-D_I(D_I^\top D_I)^{-1}D_I^\top\bigr]M_I^{1/2}\succeq0.
\]
The bracket is the orthogonal projection onto the space perpendicular
to the columns of $D_I$, so it is positive semidefinite.
Direct multiplication also gives $P_I^\top Z_I=I$ and
$M_IZ_I=P_IT_I$, hence $C_IZ_I=0$. In particular, for $z=Z_Ix$ we have
$P_I^\top z=x$ and $z^\top C_Iz=0$. These are the identities we will
use to test the second-order condition in Step 2.

We also need $C_IV_I^*=0$. The definition of $C_I$ shows that it
vanishes on $\ker M_I$ and satisfies $C_IM_I^\dagger P_I=0$.
First-order stationarity gives
$M_IV_I^*=P_I(W_I^*)^\top$, so
\[
 C_IV_I^*=C_IM_I^\dagger M_IV_I^*
         =C_IM_I^\dagger P_I(W_I^*)^\top=0.
\]
We have now proved all of the following identities:
\begin{equation}\label{eq:invariant}
 \begin{gathered}
 M_I=C_I+P_IT_IP_I^\top,\qquad T_I\succ0,\quad C_I\succeq0,\\
 C_IV_I^*=0,\qquad P_I^\top Z_I=I,\qquad C_IZ_I=0.
 \end{gathered}
\end{equation}
These identities follow from the child bounds. In particular, substituting
$M_I=C_I+P_IT_IP_I^\top$ into \eqref{eq:finite} shows that the new
coefficient gives the required bound for every feasible movement:
\begin{equation}\label{eq:certificate-energy}
 \D E_I
 =\frac12\tr(\D X_IT_I\D X_I^\top)
  +\frac12\tr(\D V_I^\top C_I\D V_I)
 \ge\frac12\tr(\D X_IT_I\D X_I^\top).
\end{equation}
Finally, substitute this decomposition of $M_I$ into
$M_IV_I^*=P_I(W_I^*)^\top$. Since $C_IV_I^*=0$ and $P_I$ has full
column rank, we can cancel $P_I$ and obtain
\begin{equation}\label{eq:alignment}
 W_I^*=X_I^*T_I.
\end{equation}

We can now combine the two child bounds. Use the new coefficients in
\eqref{eq:merge-Q-expanded}, and take $\Gamma,H$ from
Lemma~\ref{lem:compatible}. Collect the two sets of aggregate vectors
in $X_0=[X_L\ X_R]$, and collect their coefficients in
\begin{equation}\label{eq:J}
 J=\begin{pmatrix}T_L&-\Gamma\\-\Gamma^\top&T_R\end{pmatrix}.
\end{equation}
The crossing term is $\tr(\D X_L\Gamma\D X_R^\top)$, so the combined
child bound is
\begin{equation}\label{eq:child-lower}
 \D E_S\ge\frac12\tr(\D X_0J\D X_0^\top).
\end{equation}
Since $\D X_S=\D X_0H$, the remainder in
\eqref{eq:merge-Q-expanded} has the form
\begin{equation}\label{eq:merge-Q}
 \Qs=J-HT_SH^\top,\qquad
 Q=\frac12\tr(\D X_0\Qs\D X_0^\top).
\end{equation}
Here $\Qs$ is the coefficient matrix of the scalar remainder $Q$.
If we can choose $T_S\succ0$ so that $\Qs\succeq0$, then $Q\ge0$
for every change in the aggregate vectors, as required.

\smallskip\noindent\textit{Step 2. We show that $J\succeq0$ and $\ker J\subseteq\ker H^\top$.}
We first prove $J\succeq0$. Form the block diagonal matrices
\[
 C_0=\diag(C_L,C_R),\qquad Z_0=\diag(Z_L,Z_R),\qquad
 P_0=\diag(P_L,P_R).
\]
The diagonal blocks of $M[S,S]$ are $M_L$ and $M_R$, and its
off-diagonal block is $-A[L,R]=-P_L\Gamma P_R^\top$.
Using \eqref{eq:invariant} for the two children gives
\begin{equation}\label{eq:merge-matrix}
 M[S,S]=C_0+P_0JP_0^\top,\qquad
 P_0^\top Z_0=I,\qquad C_0Z_0=0,\qquad C_0V_S^*=0.
\end{equation}
To test $J$ in an arbitrary direction $x\in\R^{r_L+r_R}$, set $z=Z_0x$
and choose a unit vector $e\in U_S^\perp$. Move each vector in $S$
initially along $e$, with velocity
$\xi_i=z_ie$ for $i\in S$ and $\xi_j=0$ outside $S$.
These velocities are tangent to the spheres because $e\perp v_i^*$
for every $i\in S$. Using $C_0Z_0=0$ and $P_0^\top Z_0=I$, the
second-order condition becomes
\[
 0\le\tr(\Xi^\top M\Xi)
   =z^\top M[S,S]z=x^\top Jx.
\]
Thus $J\succeq0$.

We next show that $Jx=0$ implies $H^\top x=0$. This will allow us to
construct $T_S$ even when $J$ is singular.
If $x\in\ker J$, the perturbation just constructed has zero second
variation. The Hessian is positive semidefinite on the tangent space,
so its mixed term between this perturbation and any other tangent
direction must vanish. Indeed, nonnegativity on $\Xi+t\Upsilon$ for
both signs of $t$ rules out a nonzero linear term in $t$.
Choose $\Upsilon$ to move only an exterior node $j$, with velocity
$\zeta_j\perp v_j^*$. The mixed term gives
\begin{equation}\label{eq:mixed-zero}
 \left(\sum_{i\in S}A_{ji}z_i\right)\ip{e}{\zeta_j}=0.
\end{equation}
Under condition (1), choose $e$ perpendicular to all original vectors
and take $\zeta_j=e$. The inner product is then one, so the sum in
parentheses is zero. Under condition (2), we may choose any unit
$e\in U_S^\perp$. If the sum were nonzero, the equation would force
every such $e$ to be orthogonal to all $\zeta_j\perp v_j^*$, and hence
parallel to $v_j^*$. A space of dimension at least two cannot be
contained in this single line. The sum must again be zero.
Therefore $A[S^c,S]z=0$ in either case. Since $P_S=P_0H$,
\[
 0=A[S^c,S]z=Q_SH^\top P_0^\top Z_0x=Q_SH^\top x.
\]
Since $Q_S$ has full column rank, $Q_SH^\top x=0$ implies
$H^\top x=0$. As $x\in\ker J$ was arbitrary, we obtain
\begin{equation}\label{eq:kernel-condition}
 \ker J\subseteq\ker H^\top.
\end{equation}

\smallskip\noindent\textit{Step 3. We construct $T_S\succ0$ for which $Q\ge0$.}
By \eqref{eq:kernel-condition}, every column of $H$ is orthogonal to
$\ker J$ and therefore lies in the range of $J$. On that range,
$J^\dagger$ is positive definite. Since $H$ has full column rank,
$H^\top J^\dagger H\succ0$. Its inverse exists, so we choose
\begin{equation}\label{eq:ordinary-T}
 T_S=(H^\top J^\dagger H)^{-1}\succ0.
\end{equation}
We now check that this choice gives $Q\ge0$.
Put $D_S=J^{\dagger/2}H$. Then
$J^{1/2}D_S=H$ and $D_S^\top D_S=T_S^{-1}$, so
\begin{equation}\label{eq:projection}
 \Qs=J-HT_SH^\top
 =J^{1/2}\bigl[I-D_S(D_S^\top D_S)^{-1}D_S^\top\bigr]J^{1/2}
 \succeq0.
\end{equation}
The bracket projects onto the space perpendicular to the columns of
$D_S$ and is therefore positive semidefinite. Thus $\Qs\succeq0$, so
\eqref{eq:merge-Q} gives $Q\ge0$ for every feasible movement.
Substitution into the energy inequality proves the desired bound for $S$.

We also construct matrices $C_S,Z_S$ satisfying the same identities as
the child matrices in \eqref{eq:invariant}. These identities will be
used when combining bounds at the root. Set $G=J^\dagger HT_S$.
From the definition of $T_S$ and the fact that $H$ lies in the range of
$J$, we obtain
\begin{equation}\label{eq:G-identities}
 H^\top G=I,\qquad JG=HT_S,\qquad \Qs G=0.
\end{equation}
Define
\begin{equation}\label{eq:parent-CZ}
 C_S=C_0+P_0\Qs P_0^\top,\qquad Z_S=Z_0G.
\end{equation}
Then $C_S\succeq0$, $M[S,S]=C_S+P_ST_SP_S^\top$,
$P_S^\top Z_S=I$, and $C_SZ_S=0$.
It remains to check $C_SV_S^*=0$. First-order stationarity reads
$M[S,S]V_S^*=P_S(W_S^*)^\top$. Substitute \eqref{eq:merge-matrix},
use $C_0V_S^*=0$ and $P_S=P_0H$, and cancel the full-column-rank
matrix $P_0$. The result is
\begin{equation}\label{eq:J-stationarity}
 J(X_0^*)^\top=H(W_S^*)^\top,\qquad X_0^*=[X_L^*\ X_R^*].
\end{equation}
Multiplying by $G^\top$ gives
$(W_S^*)^\top=T_S(X_S^*)^\top$. Substituting this back yields
$\Qs(X_0^*)^\top=0$, and hence $C_SV_S^*=0$.
This verifies \eqref{eq:invariant} for $S$. The only induction
assumptions used were the two child energy bounds.

\smallskip\noindent\textit{Step 4. We prove the bounds along the rooted tree.}
If $\rank V^*<m$, a vector perpendicular to all original vectors
exists, so condition (1) applies at every nonroot merge. Starting at the leaves, Steps 1--3 therefore prove the bound for
every nonroot subtree.

Suppose instead that $\rank V^*=m$. By Lemma~\ref{lem:root}, condition
(2) applies to every merge inside the three branches $S_1,S_2,S_3$.
Each such subtree has vector span of dimension at most $k$, so its
orthogonal complement has dimension at least $m-k\ge2$. Starting at
the leaves, Steps 1--3 therefore establish the bounds throughout all
three branches. The only remaining nonroot set is $S_1\cup S_2$.

\smallskip\noindent\textit{Step 5. We prove the bound for $S_1\cup S_2$ with $Q=0$.}
Set $S=S_1\sqcup S_2$. We now construct $T_S\succ0$ with $Q=0$.
Each branch has vector span of dimension $k<m$, by
Lemma~\ref{lem:root}. Apply the coefficient construction
\eqref{eq:recover-coefficients} to the bound already established on each
branch. Since there is a vector perpendicular to all original vectors
in that branch, Step 1 gives matrices satisfying \eqref{eq:invariant}.
Write $X_i^*,W_i^*,T_i,C_i,Z_i$ for the data on $S_i$.
Lemma~\ref{lem:confinement} shows that the external forces on branch
$S_i$ span exactly $U_{S_i}$, a space of dimension $k$.
Their row matrix is $P_i(W_i^*)^\top$.
Since $P_i$ has full column rank, $W_i^*$ has rank $k$ and its column
space is $U_{S_i}$. The alignment $W_i^*=X_i^*T_i$ from
\eqref{eq:alignment}, together with $T_i\succ0$, gives
\begin{equation}\label{eq:branch-full-rank}
 r_i=k,\qquad \im X_i^*=U_{S_i}.
\end{equation}
By \eqref{eq:complementary}, the square matrix
$X_0^*=[X_1^*\ X_2^*]\in\R^{2k\times2k}$ is invertible.
Thus the columns of $X_1^*$ and $X_2^*$ together form a basis of
$\R^{2k}$. This is the fact that will allow us to prove $Q=0$.

The cut between $S$ and $S_3$ is the same cut viewed from opposite
sides. For $A[S_3,S]=P_3Q_3^\top$, choose the factors for $S$ as
$P_S=Q_3$ and $Q_S=P_3$, and set
\begin{equation}\label{eq:inverse-T}
 T_S=T_3^{-1}\succ0.
\end{equation}
Then
\[
 X_S^*=W_3^*=X_3^*T_3,
 \qquad W_S^*=X_3^*.
\]
Hence $W_S^*=X_S^*T_S$ for $T_S=T_3^{-1}\succ0$.
As in Steps 1--3, write $P_S=P_0H$ and form $J$ from $T_1,T_2$
and the interaction between $S_1$ and $S_2$.
As in \eqref{eq:J-stationarity}, first-order stationarity gives
\[
 J(X_0^*)^\top=H(W_S^*)^\top
             =HT_SH^\top(X_0^*)^\top.
\]
The invertibility of $X_0^*$ forces
\begin{equation}\label{eq:exceptional-zero}
 J-HT_SH^\top=0.
\end{equation}
Thus $Q=0$ for every movement. Define
\[
 C_S=\diag(C_1,C_2),\qquad
 Z_S=\diag(Z_1,Z_2)H(H^\top H)^{-1}.
\]
The inverse exists because $H$ has full column rank. The child
identities, \eqref{eq:exceptional-zero}, and direct multiplication
verify every condition in \eqref{eq:invariant}.
The bound follows from \eqref{eq:child-lower} and
\eqref{eq:merge-Q}, completing the induction for $k\ge2$.

Finally, any other full-column-rank factorization of the same cut
has the form $P_SB,Q_SB^{-\top}$ for an invertible matrix $B$.
Under this change, $X_S$ becomes $X_SB$. Replacing $T_S$ by
$B^{-1}T_SB^{-\top}$ preserves positive definiteness and leaves the
quadratic bound unchanged. The auxiliary relations are unchanged after replacing
$Z_S$ by $Z_SB^{-\top}$ and leaving $C_S$ fixed. Thus the claim holds
for every full-column-rank cut factorization.
\end{proof}

\section{Main proof of Theorem~\ref{thm:main}}\label{sec:proof}\label{sec:finish}

Let $V^*$ satisfy the assumptions of Theorem~\ref{thm:main}, and fix
an arbitrary feasible configuration $V$. We prove that
$E(V)-E(V^*)\ge0$. If $n=1$, the energy is constant, so assume $n\ge2$.
By Lemma~\ref{lem:positive-leaves}, every nontrivial cut is nonzero.

Choose the rooted rank-decomposition used in Lemma~\ref{lem:block-bound}.
Let $L$ and $R$ be the sets of vertices corresponding to the leaves
below the two children of the root. Then $[n]=L\sqcup R$.
Moving both sets therefore changes the entire configuration from $V^*$
to $V$. Lemma~\ref{lem:finite-energy} gives
\[
 E(V)-E(V^*)=\D E_L+\D E_R
       -\sum_{i\in L,\,j\in R}A_{ij}\ip{\D v_i}{\D v_j}.
\]
Lemma~\ref{lem:block-bound} gives lower bounds for $\D E_L$ and
$\D E_R$. It remains to show that their sum is at least the cross
term in this identity. We first do this for rank-width one, where
the two coefficients are positive scalars, and then for $k\ge2$.

\begin{proof}[Proof for $k=1$]
\textit{Step 1. We write the two bounds using the same cut factorization.}
Since $A[L,R]$ has rank one, write $A[L,R]=pq^\top$, and define
\[
 u=\sum_{i\in L}p_i v_i,\qquad
 w=\sum_{j\in R}q_j v_j.
\]
Write $u^*,w^*$ for these sums evaluated at $V^*$, and set
$\D u=u-u^*$ and $\D w=w-w^*$.
By Lemma~\ref{lem:scalar-alignment}, both original aggregates are
nonzero and $w^*=\lambda u^*$ for some $\lambda>0$.
Viewed from $R$, the same relation is $u^*=\lambda^{-1}w^*$.
The rank-one case of Lemma~\ref{lem:block-bound} therefore gives
\[
 \D E_L\ge\frac\lambda2\norm{\D u}^2,
 \qquad
 \D E_R\ge\frac1{2\lambda}\norm{\D w}^2.
\]

\smallskip\noindent\textit{Step 2. We substitute the bounds and complete the square.}
The factorization $A_{ij}=p_iq_j$ gives
\[
 \sum_{i\in L,\,j\in R}A_{ij}\ip{\D v_i}{\D v_j}
 =\left\langle\sum_{i\in L}p_i\D v_i,
                    \sum_{j\in R}q_j\D v_j\right\rangle
 =\ip{\D u}{\D w}.
\]
Substituting this expression and the two bounds into the total energy
change yields
\begin{align*}
 E(V)-E(V^*)
 &\ge\frac\lambda2\norm{\D u}^2
    +\frac1{2\lambda}\norm{\D w}^2-\ip{\D u}{\D w}\\
 &=\frac12\norm{\sqrt\lambda\,\D u-\lambda^{-1/2}\D w}^2\ge0.
\end{align*}
Since $V$ was arbitrary, $V^*$ is globally optimal.
\end{proof}

\begin{proof}[Proof for $k\ge2$]
We distinguish whether the original vectors span $\R^m$.
If they do not, a common perpendicular direction lets us test the
second-order condition at the root. If they do, the coefficients
constructed for the two root children are inverses, so we can complete
a square as in the rank-one case.

\smallskip\noindent\textbf{Case 1: The original vectors do not span $\R^m$.}
Thus $\rank V^*<m$. Fix a unit vector $e$ perpendicular to every
$v_i^*$, and insert the root on any decomposition edge.

\smallskip\noindent\textit{Step 1. We express the sum of the bounds minus the cross term as a quadratic form.}
Lemma~\ref{lem:block-bound} gives the bounds for the two root children
$L,R$. Apply the coefficient construction
\eqref{eq:recover-coefficients}, using $e$, to each of these bounds.
For $I=L,R$, we obtain $T_I\succ0$, $C_I\succeq0$, and $Z_I$ with
\[
 M[I,I]=C_I+P_IT_IP_I^\top,\qquad
 P_I^\top Z_I=I,\qquad C_IZ_I=0,
\]
and
\[
 \D E_I\ge\frac12\tr(\D X_IT_I\D X_I^\top).
\]
Here $M=\diag(\mu)-A$ is the matrix in
Lemma~\ref{lem:energy}. The matrices $C_I,Z_I$ will allow us to test
these coefficients using the second-order condition.

Choose $\Gamma$ so that $A[L,R]=P_L\Gamma P_R^\top$, as in
Lemma~\ref{lem:compatible}. Then the cross term is
$\tr(\D X_L\Gamma\D X_R^\top)$. Consequently, the two bounds and the
energy split give
\begin{align*}
 E(V)-E(V^*)
 &\ge\frac12\tr(\D X_LT_L\D X_L^\top)
      +\frac12\tr(\D X_RT_R\D X_R^\top)
      -\tr(\D X_L\Gamma\D X_R^\top)\\
 &=\frac12\tr\!\left(
 [\D X_L\ \D X_R]J_{\mathrm{root}}
 [\D X_L\ \D X_R]^\top\right),
\end{align*}
where
\[
 J_{\mathrm{root}}=
 \begin{pmatrix}T_L&-\Gamma\\-\Gamma^\top&T_R\end{pmatrix}.
\]
Thus it suffices to prove $J_{\mathrm{root}}\succeq0$.

\smallskip\noindent\textit{Step 2. We prove $J_{\mathrm{root}}\succeq0$ using the second-order condition.}
Group the child matrices into block diagonal matrices:
\[
 P_0=\diag(P_L,P_R),\qquad
 C_0=\diag(C_L,C_R),\qquad Z_0=\diag(Z_L,Z_R).
\]
Order the vertices with $L$ first and $R$ second. The identities in
Step 1 and $M[L,R]=-A[L,R]$ give
\[
 M=C_0+P_0J_{\mathrm{root}}P_0^\top,\qquad
 P_0^\top Z_0=I,\qquad C_0Z_0=0.
\]
Let $x\in\R^{r_L+r_R}$ be arbitrary and put $z=Z_0x$.
These identities imply $P_0^\top z=x$ and $C_0z=0$, so
\[
 z^\top Mz=x^\top J_{\mathrm{root}}x.
\]
To test the left-hand side, set $\xi_i=z_i e$ at every vertex.
Since $e\perp v_i^*$, each $\xi_i$ is tangent to the unit sphere at
$v_i^*$. Applying the second-order condition to the matrix $\Xi$
whose rows are $\xi_i^\top$, and using $\norm e=1$, gives
\[
 0\le\tr(\Xi^\top M\Xi)=z^\top Mz
   =x^\top J_{\mathrm{root}}x.
\]
As $x$ was arbitrary, $J_{\mathrm{root}}\succeq0$. Returning to the
inequality in Step 1, we conclude that
\begin{equation}\label{eq:root-deficient}
 E(V)-E(V^*)\ge
 \frac12\tr\!\left(
 [\D X_L\ \D X_R]J_{\mathrm{root}}
 [\D X_L\ \D X_R]^\top\right)\ge0.
\end{equation}
The last inequality holds because the trace is the sum of the
nonnegative quadratic forms obtained from the rows of
$[\D X_L\ \D X_R]$.

\smallskip\noindent\textbf{Case 2: The original vectors span $\R^m$.}
Thus $\rank V^*=m$. Lemma~\ref{lem:root} gives $m=2k$ and the
three sets $S_1,S_2,S_3$ in Figure~\ref{fig:root}. The root is placed
so that its two children correspond to $S=S_1\cup S_2$ and $S_3$.

\smallskip\noindent\textit{Step 1. We identify the coefficients for the two root children.}
Step 4 of the proof of Lemma~\ref{lem:block-bound} establishes the
bound on each of the three branches, including $S_3$. Step 5 constructs
the bound for $S$, with coefficient $T_S=T_3^{-1}$ as in
\eqref{eq:inverse-T}. Therefore,
\[
 \D E_S\ge\frac12\tr(\D X_ST_3^{-1}\D X_S^\top),
 \qquad
 \D E_{S_3}\ge\frac12\tr(\D X_3T_3\D X_3^\top).
\]
To see how the cross term is expressed in these same aggregates,
recall the factors chosen in Step 5:
$A[S_3,S]=P_3Q_3^\top$ and $A[S,S_3]=Q_3P_3^\top$.
Hence $\D X_S=(\D V_S)^\top Q_3$ and
$\D X_3=(\D V_{S_3})^\top P_3$, so
\[
 \sum_{i\in S,\,j\in S_3}A_{ij}\ip{\D v_i}{\D v_j}
 =\tr(\D X_S\D X_3^\top).
\]

\smallskip\noindent\textit{Step 2. We substitute the bounds and complete the square.}
Since $[n]=S\sqcup S_3$, Lemma~\ref{lem:finite-energy} gives
\[
 E(V)-E(V^*)=\D E_S+\D E_{S_3}
                   -\tr(\D X_S\D X_3^\top).
\]
Substitution of the two bounds yields
\begin{align}\label{eq:root-square}
 E(V)-E(V^*)
 &\ge\frac12\tr(\D X_ST_3^{-1}\D X_S^\top)
      +\frac12\tr(\D X_3T_3\D X_3^\top)
      -\tr(\D X_S\D X_3^\top)\notag\\
 &=\frac12\norm{\D X_ST_3^{-1/2}-\D X_3T_3^{1/2}}_F^2\ge0.
\end{align}
Here $T_3^{1/2}$ is the symmetric positive definite square root of
$T_3$. Expanding the squared norm gives exactly the two quadratic
terms and the cross term above.

Thus $E(V)\ge E(V^*)$ in both cases. Since $V$ was an arbitrary feasible
configuration, this proves Theorem~\ref{thm:main}.
\end{proof}

\section*{Declaration of AI assistance and formal verification}
The research idea and overall approach were developed by the author.
OpenAI's GPT-6 Astra was used to assist in developing and drafting the
proof of Lemma~\ref{lem:block-bound} for $k\ge2$, and in revising the
exposition. The author takes full responsibility for the content of the
manuscript.

Theorem~\ref{thm:main} and the energy lower bounds in
Lemma~\ref{lem:block-bound} have been formally verified in Lean~4 for
all parameter regimes stated therein.
The Lean source files and verification instructions are provided as
ancillary material.

\end{document}